\documentclass[11pt]{amsart}

\usepackage[utf8]{inputenc}
\usepackage{amsmath,amssymb,amsthm,mathrsfs}
\usepackage{mathtools}
\usepackage{enumerate}
\usepackage{centernot}

\usepackage{xcolor}
\newif\ifshowchanges
\showchangestrue
\DeclareRobustCommand{\rev}[1]{{\ifshowchanges\fi #1}}
\newenvironment{revision}{\begingroup\ifshowchanges\fi}{\endgroup}

\numberwithin{equation}{section}

\newcommand{\R}{\mathbb{R}}

\theoremstyle{plain}
\newtheorem{theorem}{Theorem}[section]
\newtheorem{corollary}[theorem]{Corollary}
\newtheorem{lemma}[theorem]{Lemma}
\newtheorem{proposition}[theorem]{Proposition}
\newtheorem{conjecture}[theorem]{Conjecture}

\theoremstyle{definition}
\newtheorem{definition}[theorem]{Definition}

\newtheorem{remark}[theorem]{Remark}

\usepackage{amsrefs}
\usepackage[hidelinks]{hyperref}
\hypersetup{linktoc=all,bookmarksdepth=2}
\title[Proof of the Penrose Conjecture 
for Maximally Parameterized Initial Data Sets]
{Proof of the Penrose Conjecture\\
for Maximally Parameterized Initial Data Sets}

\author[Bray, Dolmen, Khuri, Schmidt]
{Hubert Bray, Tuan Dolmen, Marcus Khuri, and David Schmidt}

\address{(Hubert Bray, Tuan Dolmen, and David Schmidt)
Department of Mathematics, Duke University, Durham, NC 27708, USA}
\email{hubert.bray@duke.edu, tuan.dolmen@duke.edu, david.schmidt@duke.edu}

\address{(Marcus Khuri) Department of Mathematics, Stony Brook University,
Stony Brook, NY 11794, USA}
\email{marcus.khuri@stonybrook.edu}

\thanks{M. Khuri acknowledges the support of NSF Grant DMS-2405045.}

\begin{document}

\begin{abstract}
\rev{We prove the spacetime Penrose inequality for asymptotically flat initial data sets $(M^n,g,k)$ in all dimensions $n\geq 3$, under the
dominant energy condition, for data admitting the `maximally parameterized form' defined in equation \eqref{eq:kMP}.} In this case, the second fundamental form $k$ is locally parameterized by $n(n+1)/2$ functions, giving the same formal local
functional degrees of freedom as an arbitrary symmetric $2$-tensor.

\rev{The proof is based on a new 1-parameter family of generalized Schoen--Yau identities and their
gauge-invariant specialization},
inspired by the generalized Jang framework of Bray--Khuri \rev{\cite{BrayKhuri2010,BrayKhuri2011}}.  For \rev{maximally parameterized data}, these identities produce a Riemannian comparison
metric
\[
    \bar g=g+\beta\otimes\beta
\]
with nonnegative scalar curvature $R_{\bar{g}} \geq 0$.  Under a natural nondegeneracy
condition \eqref{eq:simplezero} at the apparent horizon, the comparison metric admits a \rev{smooth
minimal completion up to $\partial M^n$.} Together with suitable asymptotic
control, this reduces the Penrose inequality to its Riemannian
counterpart.
\rev{The all-dimensional conclusion uses the Riemannian Penrose theorem
of Bi--Zhu \cite{BiZhu2026}, together with a minimizing enclosure argument
that allows singular boundaries.} 

More generally, in the spirit of \cite{BrayKhuri2010,BrayKhuri2011}, we note that the Penrose conjecture follows whenever a certain system of PDEs \eqref{eq:gauge-system} involving $k$ can be solved, in which case our new identity \eqref{eq:main-identity} immediately implies $R_{\bar{g}} \geq 0$. A natural problem is thus to regularize our degenerate system to prove a general existence theory, thereby proving the Penrose conjecture. 
Essentially, we believe this is what the OpenAI paper \cite{OpenAIPenrose} posted last week on October 6, 2026 claims to have done.  
\end{abstract}

\maketitle

\tableofcontents

\section{Introduction}

The Penrose inequality \cite{Penrose1973} in mathematical general relativity conjectures that asymptotically flat initial data sets for the Einstein field equations, satisfying the dominant energy condition, must have ADM mass bounded below by a precise function of the total area of the black holes it contains.

The physical heuristic behind the conjecture combines several basic
principles of black hole dynamics.  The weak cosmic censorship
conjecture \cite{Penrose1969} predicts that singularities produced by gravitational
collapse are hidden behind an event horizon.  The Hawking area theorem \cite{Hawking1971},
assuming the appropriate energy condition, implies that the area of
the event horizon is nondecreasing toward the future.  One further
expects an isolated black hole spacetime to radiate away its
nonstationary degrees of freedom and approach a stationary black hole;
in four-dimensional vacuum general relativity, the black hole
uniqueness theorems \cite{ChruscielCostaHeusler2012} identify the stationary end state with a Kerr
black hole.  Since energy may be radiated to null infinity, the mass of
the final black hole is no greater than the initial ADM mass, while its
horizon area is no smaller than the relevant initial horizon area.
Combining these facts with the Kerr mass--area relation leads to the
Penrose inequality. In its initial data formulation, the conjecture takes the following form.

\begin{revision}
\begin{conjecture}[Penrose \cite{Penrose1973}]\label{conj:penrose}
Let $(M^n,g,k)$, $n\geq3$, be a complete asymptotically flat initial data
set satisfying the dominant energy condition $\mu\geq|J|_g$, with compact outermost apparent horizon
boundary $\partial M^n$. Then
\begin{equation}\label{eq:penrose}
 m_{\rm ADM}\geq\frac12
 \left(\frac{\mathcal A_g}{\omega_{n-1}}\right)^{\frac{n-2}{n-1}},
\end{equation}
where $\omega_{n-1}=|\mathbb S^{n-1}|$, $m_{\rm ADM}$ is the
ADM mass, and $\mathcal A_g$ is the
infimum of the areas of smooth compact hypersurfaces enclosing
$\partial M^n$. Moreover, equality in \eqref{eq:penrose} holds if and only if $(M^n,g,k)$ embeds isometrically into the Schwarzschild spacetime with induced metric $g$ and second fundamental form $k$. 
\end{conjecture}
\end{revision}

\rev{The Penrose inequality may be viewed as a refinement of the \textit{positive mass theorem} (PMT) \cite{BrendleWang1,BrendleWang2,HKLZ,SchoenYau1979,SchoenYau1981,Witten1981},
which establishes the energy-linear momentum inequality $E_{\rm ADM}\geq |P_{\rm ADM}|$. In dimension three, equality characterizes spacelike slices of
Minkowski space; in higher dimensions the rigidity statement depends
on the asymptotic hypotheses, as discussed in \cite[Theorem~1.2 and
Remark~1.3]{HKLZ}. The Penrose inequality strengthens this to a sharp lower bound
determined by the black hole geometry. Note that using the area of an outermost apparent horizon instead of the minimal enclosing area is not valid in general, as there are counterexamples \cite{BenDov2004}.}

In the time-symmetric case $k = 0$, the dominant energy condition reduces to
nonnegative scalar curvature, the apparent horizon reduces to a minimal surface,
and the conjecture becomes the \emph{Riemannian Penrose inequality} (RPI).
This was proven by Huisken--Ilmanen \cite{HuiskenIlmanen2001}
for $n = 3$ with a connected boundary 
using inverse mean curvature flow,
and by Bray \cite{Bray2001} for $n = 3$ without the connectedness assumption
using a conformal flow of metrics. 
\rev{Later Bray--Lee \cite{BrayLee2009} extended Bray's result to dimensions $3 \leq n \leq 7$, and more recently Bi--Zhu \cite{BiZhu2026} extended the same approach to all dimensions $n\geq 3$.} Recent progress on the spacetime Penrose inequality includes \cite{AllenBrydenKazarasKhuri2025,AnHe,Dong1,Dong2,Ellithy,Hirsch,KhuriKunduri}.

In the following, we prove a Penrose inequality for a distinguished
class of initial data sets, which we call \emph{maximally parameterized
initial data}.  To each such initial data set, satisfying the dominant energy condition, 
we associate a Riemannian comparison metric
\[
\bar g=g+\beta\otimes\beta
\]
with nonnegative scalar curvature, where $\beta$ is a 1-form.  The remaining issues associated with reducing
the problem to the Riemannian Penrose inequality involve controlling the
behavior of $\bar g$ at the apparent horizon and at spatial infinity.
\rev{Under a natural nondegeneracy condition at the horizon, the comparison metric admits a smooth completion with a totally geodesic, hence minimal, inner boundary};
under suitable asymptotic control, its ADM energy agrees with that 
of the original initial data.  To ensure $R_{\bar{g}} \geq 0$, we generalize the approach taken by Bray--Khuri \cite{BrayKhuri2010,BrayKhuri2011}, which is already a generalization of the Schoen--Yau approach \cite{SchoenYau1981} reducing the spacetime PMT to the Riemannian PMT. \rev{In particular, we present a 1-parameter family of Schoen--Yau identities, parameterized by $t\in [0,1]$, that connects the identity established by Bray--Khuri at $t=0$ to its
gauge-invariant specialization at $t=1$. The latter identity ultimately results in a gauge-invariant
Jang zero-divergence system.} 
\begin{revision}
We will utilize the following naming conventions associated with different values of the parameter:
\begin{center}
\begin{tabular}{@{}lll@{}}
$t=0$ & Generalized Schoen--Yau identity & \eqref{eq:BK}\\[3pt]
$t\in[0,1]$ & New Schoen--Yau identity & \eqref{eq:GSY-param-t}\\[3pt]
$t=1$ & Gauge-invariant Schoen--Yau identity & \eqref{eq:main-identity}
\end{tabular}
\end{center}

The guiding observation is that, for fixed $g$ and $k$, the left-hand
side of the generalized Schoen--Yau identity (the scalar curvature of $\bar{g}$) depends on the graph
variables only through $\beta=\phi\,df$, since
$\bar g=g+\beta\otimes\beta$. We therefore seek a reformulation in
which \emph{each term} on the right-hand side has the same invariance.
This leads to the new Schoen--Yau identity for general $t$, with the
desired termwise invariance at $t=1$ on $\{\beta\neq0\}$; at zeros of $\beta$ we
use the specified smooth intrinsic pair $(\nu,b)$ as explained in
Section~\ref{sec:new-identities}. Under the dominant energy condition,
the $t=1$ identity gives $R_{\bar g}\geq0$ when
$\mathrm{I}=0$ and $\mathrm{II}=0$, the boxed system
\eqref{eq:gauge-system}. Any prescribed $k$ admitting such a solution,
with the global completion and asymptotic properties required in
Theorem~\ref{thm:main}, consequently satisfies the stated ADM-energy
Penrose bound. The maximally parameterized form \eqref{eq:kMP}, with $Q$ satisfying
its defining weighted divergence equation, constructs tensors $k$
for which this system holds. The bound follows for the resulting
initial data under those geometric hypotheses. 
\end{revision}

The following simplified statement records the main consequence of the
construction. A more precise version, \rev{including explicit
boundary-completion hypotheses}, is given in Theorem~\ref{thm:main}.

\begin{revision}
\begin{theorem}[Main theorem]\label{thm:main-intro}
Let $(M^n,g,k)$, $n\geq3$, be an asymptotically flat maximally
parameterized initial data set satisfying the dominant energy condition $\mu\geq|J|_g$,
with compact marginally outer trapped surface (MOTS) boundary $\partial M^n$.
Assume that the associated comparison metric $\bar g$ admits a
smooth completion up to $\partial M^n$ with minimal boundary and
induced metric $g|_{T\partial M^n}$, is asymptotically flat, and
has the same ADM energy as $g$. Then 
\[
E_{\rm ADM}\geq
\frac12\left(
\frac{\mathcal A_g}{\omega_{n-1}}
\right)^{\frac{n-2}{n-1}},
\]
where $\mathcal A_g$ is the infimum of the $g$-areas of smooth compact hypersurfaces enclosing
$\partial M^n$.
\end{theorem}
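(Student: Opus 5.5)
The plan is to reduce the statement to the Riemannian Penrose inequality applied to the comparison metric $\bar g=g+\beta\otimes\beta$. There are three steps.

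\textbf{Step 1: nonnegative scalar curvature.} For maximally parameterized data, the form \eqref{eq:kMP}, together with the weighted divergence equation defining $Q$, is built exactly so that the gauge-invariant system \eqref{eq:gauge-system} holds, i.e. $\mathrm{I}=0$ and $\mathrm{II}=0$. I would substitute this into the $t=1$ identity \eqref{eq:main-identity}. The right-hand side then consists of $2(\mu-\langle J,\cdot\rangle)$-type terms, which are nonnegative by the dominant energy condition $\mu\geq|J|_g$ because the relevant vector has $g$-length at most one, plus a sum of squares and a divergence term that vanishes by $\mathrm{II}=0$.

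At zeros of $\beta$ the identity is not termwise defined from $\beta$ alone. There I would use the smooth intrinsic pair $(\nu,b)$ from Section~\ref{sec:new-identities} and argue by continuity, so that $R_{\bar g}\geq0$ holds pointwise on all of $M^n$.

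\textbf{Step 2: the boundary and the area functional.} By hypothesis $\bar g$ extends smoothly to $\partial M^n$ with minimal boundary and induced metric $g|_{T\partial M^n}$. It is also asymptotically flat with $E_{\rm ADM}(\bar g)=E_{\rm ADM}(g)$.

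Next I compare areas. Since $\bar g\geq g$ as quadratic forms, every hypersurface $\Sigma$ satisfies $|\Sigma|_{\bar g}\geq|\Sigma|_g$. Hence the $\bar g$-minimal enclosing area $\bar{\mathcal A}$ satisfies $\bar{\mathcal A}\geq\mathcal A_g$.

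\textbf{Step 3: applying the Riemannian Penrose inequality.} I would apply the Riemannian Penrose theorem of Bi--Zhu \cite{BiZhu2026}, valid for all $n\geq3$, to $(M^n,\bar g)$. The difficulty is that $\partial M^n$ need not be outer-minimizing, or even outermost, for $\bar g$.

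To handle this, I would take the outermost minimal area enclosure $\Sigma^*$ of $\partial M^n$ in $(M^n,\bar g)$. It is obtained by minimizing $\bar g$-area among hypersurfaces enclosing $\partial M^n$, and exists by geometric measure theory using the barrier at infinity. The region outside $\Sigma^*$ has $R_{\bar g}\geq0$ and an outer-minimizing minimal boundary, though that boundary may be singular when $n\geq8$. Applying the RPI there gives
\[
E_{\rm ADM}(\bar g)\geq\tfrac12(|\Sigma^*|_{\bar g}/\omega_{n-1})^{\frac{n-2}{n-1}}\geq\tfrac12(\mathcal A_g/\omega_{n-1})^{\frac{n-2}{n-1}}.
\]
The second inequality holds because $\Sigma^*$ encloses $\partial M^n$ and $|\Sigma^*|_{\bar g}\geq|\Sigma^*|_g\geq\mathcal A_g$, after approximating $\Sigma^*$ by smooth enclosing hypersurfaces. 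Combined with $E_{\rm ADM}(\bar g)=E_{\rm ADM}(g)$, this is the claim.

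\textbf{Main obstacle.} I expect the hardest step to be this last one when $n\geq8$, where $\Sigma^*$ may carry a singular set. One must check that Bi--Zhu's theorem, or its proof via the conformal flow, tolerates such boundaries. My first attempt would be to perturb: smooth $\Sigma^*$ from outside by hypersurfaces whose area converges, and use the strict-minimizing/enclosure version of RPI stated for the infimum of enclosing areas. If that fails, I would instead double across the totally geodesic boundary given by the completion hypothesis.
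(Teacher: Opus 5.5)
Your proposal is correct and follows the paper's proof. It obtains $R_{\bar g}\ge0$ from the $t=1$ identity with $\mathrm{I}=\mathrm{II}=0$ on the dense regular presentation set plus continuity, gets $\mathcal A_{\bar g}\ge\mathcal A_g$ from $\bar g\ge g$, and feeds the outermost $\bar g$-minimizing enclosure, with smooth exterior approximation, into Bi--Zhu's Riemannian Penrose inequality, as in Proposition~\ref{prop:minhull}. The singular-boundary obstacle you flag for $n\ge8$ is handled directly by the form of Bi--Zhu's theorem used in the paper (Theorem~\ref{thm:RPI}), which already allows almost-minimizing Caccioppoli boundaries with minimal regular part, so your perturbation and doubling fallbacks are not needed.
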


\begin{remark}[Energy and mass]
ADM energy agrees with ADM mass when $P_{\rm ADM}=0$.
For $E_{\rm ADM}>|P_{\rm ADM}|$, an asymptotic rest-frame boost gives the mass bound if the
boost and any required approximation preserve the theorem's hypotheses
and the minimum-enclosing-area lower bound; cf.\
\cite[Section~6.3]{EichmairHuangLeeSchoen2016} and
\cite[Section~5]{AllenBrydenKazarasKhuri2025}.
\end{remark}
\end{revision}

Two additional issues remain outside the scope of the theorem.  First,
\rev{the completion hypotheses exclude horizon ends for which the}
comparison metric develops more degenerate, potentially cylindrical,
asymptotics; these are discussed in Section~\ref{sec:degenerate}.
Second, the phrase `maximally parameterized' refers to the local
functional degree of freedom count of the construction.  We do not
claim that every global symmetric $2$-tensor $k$ admits such a
presentation; the corresponding global surjectivity problem is
discussed in Section~\ref{sec:surjectivity}.

\subsection*{Acknowledgements}
The authors would like to thank Conghan Dong, Sven Hirsch, and Demetre Kazaras for helpful discussions.

\section{Geometric Preliminaries}

\subsection{Initial data, horizons, and the Riemannian Penrose inequality}

Let $(M^n, g)$ be a smooth, connected, oriented, complete Riemannian manifold
with boundary $\partial M^n$, where $n \geq 3$.
An \emph{initial data set} $(M^n, g, k)$ consists of $(M^n, g)$
together with a smooth symmetric $2$-tensor $k$. The tensor $k$ represents the second fundamental form associated with an isometric embedding of $(M^n,g)$, as a spacelike slice, into an $n+1$-dimensional spacetime. \rev{Throughout this work the dimension superscript $n$ will typically be removed from the manifold notation. We write $M^\circ=M\setminus\partial M$ for the interior.} 

The \emph{energy density} $\mu$ and \emph{momentum density} $J$ are defined by
the constraint equations
\begin{align}
  16\pi\mu &= R_g + (\operatorname{tr}_g k)^2 - |k|_g^2, \label{eq:hamiltonian}\\
  8\pi J &= \operatorname{div}_g k - d(\operatorname{tr}_g k). \label{eq:momentum}
\end{align}
The \emph{dominant energy condition} (DEC) is the requirement
$\mu \geq |J|_g$ at every point of $M$.
An initial data set $(M^n, g, k)$ \rev{will be referred to as} \emph{asymptotically flat} if
there exists a compact set $K \subset M$ and a \rev{diffeomorphism} $\varphi: M\setminus K\to \R^n\setminus B_1(0)$ such that in the associated Cartesian coordinates $x$ the metric and second fundamental form satisfy
\begin{equation}\label{eq:AF-decay}
  \rev{g_{ij}-\delta_{ij}=O_2(r^{-p}),\qquad k_{ij}=O_1(r^{-p-1}),\qquad r=|x|,}
\end{equation}
for some $p > (n-2)/2$. \rev{Here $O_j(r^{-a})$ includes coordinate
derivatives through order $j$, with the corresponding additional powers
of $r^{-1}$. Furthermore, we will assume that $\mu, |J|_g \in L^1(M)$.
For a Riemannian comparison metric, asymptotic flatness includes
the same $O_2$ metric decay and integrable scalar curvature.}
The \emph{ADM energy} and \emph{ADM linear momentum} of an asymptotically flat initial data set are defined by
\begin{equation}\label{eq:ADMEnergy}
E_{\rm ADM} \coloneq
\lim_{r\to\infty}
\frac{1}{2(n-1)\omega_{n-1}}
\int_{S_r}
\sum_{i,j=1}^{n}\bigl(\partial_j g_{ij}-\partial_i g_{jj}\bigr)\upsilon^i\,dA,
\end{equation}
\begin{equation}\label{eq:ADMMomentum}
(P_{\rm ADM})_i\coloneq
\lim_{r\to\infty}
\frac{1}{(n-1)\omega_{n-1}}
\int_{S_r}
\sum_{j=1}^{n}\bigl(k_{ij}-(\operatorname{tr}_g k)g_{ij}\bigr)\upsilon^j\,dA,
\end{equation}
where $S_r$ is the coordinate sphere of radius $r$, and
$\upsilon$ is the unit outer normal. \rev{The stated decay and integrability
conditions give well-defined ADM quantities,
see \cite{Bartnik1986,Chrusciel1986}.}  
For initial data satisfying the hypotheses of the positive mass theorem,
the \rev{ADM energy--momentum vector is future causal}, so that
\[
E_{\rm ADM}\geq |P_{\rm ADM}|.
\]
The corresponding ADM mass is then
\begin{equation}\label{eq:ADMMass}
m_{\rm ADM}
\coloneq
\sqrt{E_{\rm ADM}^2-|P_{\rm ADM}|^2}.
\end{equation}

A smooth, closed, two-sided hypersurface $\Sigma^{n-1} \subset (M, g, k)$
with outward-pointing unit normal $\nu$ is called
a \emph{marginally outer trapped surface} (MOTS) if
$H_\Sigma + \operatorname{tr}_\Sigma k = 0$,
and a \emph{marginally inner trapped surface} (MITS) if
$H_\Sigma - \operatorname{tr}_\Sigma k = 0$,
where $H_\Sigma$ is the mean curvature of $\Sigma$ in $(M, g)$
(positive for a round sphere in flat $\mathbb{R}^n$)
and $\operatorname{tr}_\Sigma k = \operatorname{tr}_g k - k(\nu, \nu)$
is the trace of $k$ restricted to $\Sigma$.
In the time-symmetric case $k = 0$, both MOTS and MITS reduce to
minimal surfaces ($H_\Sigma = 0$), and the Penrose conjecture reduces to
the Riemannian Penrose inequality.
We will apply the following result in the final step of our proof.

\begin{revision}
\begin{theorem}[Riemannian Penrose inequality;
Bray \cite{Bray2001}, Bray--Lee \cite{BrayLee2009},
Bi--Zhu \cite{BiZhu2026}]\label{thm:RPI}
Let $(M^n,g)$, $n\geq3$, be asymptotically flat with $R_g\geq0$,
complete up to a compact almost-minimizing Caccioppoli boundary $\Sigma$
whose regular part is minimal. Assume that $\Sigma$ is its own
outermost $g$-minimizing enclosure and that $g$ admits a
$C^{2,\alpha}$ Riemannian extension across $\Sigma$, for some
$0<\alpha<1$, as in \cite[Theorem~1.2]{BiZhu2026}.
Then
\[
 E_{\rm ADM}\geq\frac12
 \left(\frac{\mathcal H_g^{n-1}(\Sigma)}{\omega_{n-1}}\right)^{\frac{n-2}{n-1}},
\]
where $\mathcal{H}_g^{n-1}$ denotes the $(n-1)$-dimensional Hausdorff measure induced by $g$.
Equality holds if and only if $(M^n,g)$ is isometric to a Riemannian
Schwarzschild exterior.
\end{theorem}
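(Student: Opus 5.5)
This theorem is quoted from the literature rather than proved in the paper, so the plan is to reduce it carefully to the cited results. The strategy is Bray's conformal flow as implemented by Bray--Lee and Bi--Zhu.

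\textbf{Step 1: setup.} Normalize so that $\Sigma$ is the outermost $g$-minimizing enclosure. The given $C^{2,\alpha}$ Riemannian extension of $g$ across $\Sigma$ places us exactly in the setting of \cite[Theorem~1.2]{BiZhu2026}.

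\textbf{Step 2: the conformal flow.} Run Bray's flow $g_s=u_s^{4/(n-2)}g$. Here $u_s$ is harmonic with respect to $g$ outside the current horizon $\Sigma_s$, equal to $0$ on $\Sigma_s$, and has prescribed asymptotics at infinity. The horizon $\Sigma_s$ is then redefined as the outermost minimizing hull with respect to $g_s$. Along this flow three things hold:
\begin{itemize}
\item $R_{g_s}\geq 0$ is preserved, because the conformal factor is $g$-harmonic;
\item the area $\mathcal H^{n-1}_{g_s}(\Sigma_s)$ stays constant, by the minimizing-hull property;
\item the ADM energy is nonincreasing. This is the key monotonicity. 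It rests on a doubling argument across the horizon and the Riemannian positive mass theorem applied to the reflected, doubled manifold.
\end{itemize}

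\textbf{Step 3: the limit.} As $s\to\infty$, the metrics converge to a Schwarzschild exterior whose horizon has the same area. Comparing masses gives
\[
E_{\rm ADM}\geq\frac12\left(\frac{\mathcal H_g^{n-1}(\Sigma)}{\omega_{n-1}}\right)^{\frac{n-2}{n-1}}.
\]
If equality holds, the monotone quantity is constant along the flow, and this forces $(M,g)$ to be Schwarzschild.

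\textbf{Main obstacle: dimension and regularity.} When $n\geq 8$, the minimizing hulls may have singular sets. The doubling step and the positive mass theorem must then be applied across non-smooth hypersurfaces. This is precisely what Bi--Zhu handle, using the almost-minimizing Caccioppoli boundary framework together with the $C^{2,\alpha}$ extension hypothesis, and the positive mass theorem in all dimensions.

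Accordingly, the proof here consists of checking that our hypotheses match theirs: the Caccioppoli boundary, its regular part being minimal, and the extension. Under those identifications the statement follows by citation, from \cite{Bray2001} for $n=3$, \cite{BrayLee2009} for $n\leq 7$, and \cite{BiZhu2026} in general.
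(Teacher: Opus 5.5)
Your proposal matches the paper, which does not prove this theorem but quotes it from Bray, Bray--Lee and Bi--Zhu, with the hypotheses (almost-minimizing Caccioppoli boundary, outermost minimizing enclosure, $C^{2,\alpha}$ extension) phrased to match Bi--Zhu's Theorem~1.2, so the reduction to citation is exactly the paper's. One slip in your sketch of Bray's flow: it is the velocity $v_s=\partial_s u_s$ that vanishes on $\Sigma_s$, while $u_s=1+\int_0^s v_\tau\,d\tau$ stays positive; if $u_s$ vanished on $\Sigma_s$ the horizon would have zero $g_s$-area, contradicting the area constancy you invoke, which actually comes from $v_s|_{\Sigma_s}=0$ together with the minimizing property.
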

\end{revision}

\subsection{Integrable 1-forms and Foliation Geometry}

Let $U\subset M^\circ$ be an open set on which a smooth unit $1$-form
$\nu$ satisfies
\[
|\nu|_g=1,
\qquad
\nu\wedge d\nu=0.
\]
By the Frobenius theorem, the distribution $\ker\nu$ is locally tangent
to a codimension-one foliation.  Thus, after possibly shrinking $U$,
there exists a smooth local defining function $r$ with $dr\neq0$ such that
\[
\nu=\frac{dr}{|dr|_g}.
\]
We write
\[
\lambda=|dr|_g,
\qquad
\Sigma_r=\{r=\mathrm{const}\}
\]
for the associated local foliation.

Associated to $\nu$ and its level-set foliation we define
the \emph{leafwise metric} $\gamma_{ij} = g_{ij} - \nu_i \nu_j$,
the \emph{geodesic curvature} $\kappa^i = \nu^j \nabla_j \nu^i$
of the $\nu$-integral curves (which satisfies $g(\kappa, \nu) = 0$),
the \emph{mean curvature} $H_\Sigma = \operatorname{div}_g(\nu)$
of the leaves,
\rev{and the} \emph{second fundamental form}
\begin{equation}\label{eq:P}
  P_{ij}
  =
  \tfrac12(\nabla_i\nu_j+\nabla_j\nu_i)
  -\tfrac12(\nu_i\kappa_j+\nu_j\kappa_i).
\end{equation}

The tensor $P$ is symmetric, satisfies $P\nu = 0$,
and has trace $\operatorname{tr}_g P = H_\Sigma$. For a smooth function $\psi$ on $M$, we denote its leafwise
differential by
\[
d_\Sigma\psi
\coloneq
d\psi-\nu(\psi)\nu.
\]
Equivalently, $d_\Sigma\psi$ is the restriction of $d\psi$ to the
tangent spaces of the leaves, extended by zero on $\nu$. For a vector field $X$ tangential to the leaves
(i.e.\ $g(X, \nu) = 0$),
the ambient and leaf divergences are related by
\begin{equation}\label{eq:leaf-div}
  \operatorname{div}_\Sigma(X) = \operatorname{div}_g(X) + g(X, \kappa).
\end{equation}

The following identity relates the geodesic curvature to the foliation function $r$, and becomes useful later on. 

\begin{lemma}\label{lem:kappa-log-lambda}
Let $\nu=dr/|dr|_g$ and $\lambda=|dr|_g$. Then
\begin{equation}\label{eq:kappa-log-lambda}
\kappa=d_\Sigma\log\lambda.
\end{equation}
\end{lemma}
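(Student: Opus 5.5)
The plan is a direct computation in index notation. Write $\nu_j=\lambda^{-1}\nabla_j r$ and expand $\kappa_i=\nu^j\nabla_j\nu_i$; the only input is the symmetry of the Hessian $\nabla_i\nabla_j r$. Differentiating the quotient gives
\[
\nabla_j\nu_i=\lambda^{-1}\nabla_j\nabla_i r-\lambda^{-2}\nabla_i r\,\nabla_j\lambda
=\lambda^{-1}\nabla_j\nabla_i r-\nu_i\,\nabla_j\log\lambda .
\]
Contracting with $\nu^j$ then produces
\[
\kappa_i=\lambda^{-1}\nu^j\nabla_j\nabla_i r-\nu_i\,\nu(\log\lambda).
\]

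The key step is to identify the Hessian term. Since $\lambda^2=|dr|_g^2=\nabla^j r\,\nabla_j r$, we have $\nabla_i(\lambda^2)=2\nabla^j r\,\nabla_i\nabla_j r$. Using the symmetry $\nabla_i\nabla_j r=\nabla_j\nabla_i r$, this gives
\[
\lambda^{-1}\nu^j\nabla_j\nabla_i r=\lambda^{-2}\nabla^j r\,\nabla_i\nabla_j r=\tfrac12\lambda^{-2}\nabla_i(\lambda^2)=\nabla_i\log\lambda .
\]
Substituting, $\kappa=d\log\lambda-\nu(\log\lambda)\nu=d_\Sigma\log\lambda$, by the definition of the leafwise differential.

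As a consistency check, the result satisfies $g(\kappa,\nu)=0$, in agreement with the property recorded after the definition of $\kappa$. The argument is local, on any open set where $dr\neq0$, so $\lambda>0$ and $\log\lambda$ is smooth.

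There is no genuine obstacle. The one point needing care is using the symmetry of the Hessian to convert the derivative along $\nu$ of $dr$ into the gradient of $|dr|$. Without this step the term would look like a directional derivative rather than a full differential. The answer also does not depend on the choice of defining function $r$: replacing $r$ by $h(r)$ changes $\log\lambda$ by a function of $r$ alone, whose leafwise differential vanishes.
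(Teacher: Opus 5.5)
Your proposal is correct and matches the paper's proof: differentiate $\nu_i=\nabla_i r/\lambda$, contract with $\nu^j$, and use Hessian symmetry to get $\nu^j\nabla_j\nabla_i r=\nabla_i\lambda$. The only difference is cosmetic: you get that identity by differentiating $\lambda^2=|\nabla r|_g^2$, while the paper expands $\nabla_i(\nu^j\nabla_j r)$ with the product rule and uses $\nu^j\nabla_i\nu_j=0$.
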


\begin{proof}
Write $\nu_i = \partial_i r / \lambda$. Differentiating:
\[
  \nabla_j \nu_i = \frac{\nabla_j \nabla_i r}{\lambda} - \frac{\nu_i\, \partial_j \lambda}{\lambda}.
\]
Contracting with $\nu^j$ and using $\nu^j \nabla_j \nabla_i r = \partial_i \lambda$
(which follows from $\nu^j \nabla_j \nabla_i r = \nu^j \nabla_i \nabla_j r
= \nabla_i(\nu^j \nabla_j r) - \nabla_j r \,\nabla_i \nu^j
= \nabla_i \lambda \rev{-} \lambda \nu^j \nabla_i \nu_j = \nabla_i \lambda$,
since $\nu^j \nabla_i \nu_j = \tfrac{1}{2} \nabla_i |\nu|_g^2 = 0$):
\[
\kappa_i
=
\partial_i\log\lambda
-
\nu_i\,\nu(\log\lambda)
=
(d_\Sigma\log\lambda)_i.\qedhere
\]
\end{proof}

Finally, let $b\colon M\to\mathbb R$ be a smooth scalar function and set
\[
W=(1+b^2)^{1/2},
\qquad
\bar g=g+b^2\,\nu\otimes\nu.
\]
We refer to $\bar g$ as the associated Riemannian comparison metric.
Its inverse metric and volume element are
\[
\bar g^{ij}=g^{ij}-v^iv^j,
\qquad
\sqrt{\det\bar g}=W\sqrt{\det g},
\]
where
\[
v=\frac{b}{W}\nu.
\]
In particular,
\[
|v|_g=\frac{|b|}{W}<1
\]
whenever $b$ is finite.

\section{\rev{The New Schoen--Yau Identity and Its Gauge-Invariant Form}}
\label{sec:new-identities}

\subsection{\rev{The Generalized Schoen--Yau Identity ($t=0$)}}

We review the Jang deformation, originally applied by Schoen--Yau to successfully reduce the spacetime PMT to the Riemannian PMT and later adapted by Bray--Khuri in an attempt to do the same for the Penrose inequality. The proof of Theorem~\ref{thm:main} relies on a modification of the latter strategy, resulting in \rev{the new Schoen--Yau identity and its gauge-invariant $t=1$ specialization, introduced below}. 

Given asymptotically flat initial data $(M,g,k)$ satisfying the DEC, consider smooth functions $f,\phi:M\to \R$ \rev{with $\phi>0$} and construct the deformed metric \begin{equation}
    \bar{g} \coloneq g + \phi^2\,df\otimes df.
\end{equation} Using this metric, construct the warped product spacetime $(\R \times M , -\phi^2\,dt^2 + \bar{g})$. The Riemannian manifold $(M,g)$ embeds as a graph in $(\R \times M , -\phi^2\,dt^2 + \bar{g})$ with metric $g = \bar{g} - \phi^2 \, df \otimes df$ and second fundamental form \begin{equation}
    h = \frac{\phi \nabla^2_g f + d\phi \otimes df + df \otimes d\phi}{\sqrt{1+\phi^2|df|_g^2}}.
\end{equation} Define the difference tensor \begin{equation}
    \delta(0) \coloneq h-k.
\end{equation} The choice of notation here will become clear in the next section when we deform this difference to derive \rev{the new Schoen--Yau identity for general $t$}. 
The scalar curvatures of the metrics $\bar{g}$ and $g$ are related by the generalized Schoen--Yau identity, developed by Bray--Khuri \rev{\cite{BrayKhuri2010,BrayKhuri2011}}.

\begin{theorem}[\rev{Generalized Schoen--Yau identity, $t=0$}]
\label{id:BK}
For any smooth $f$, $\phi > 0$ on $(M, g, k)$, define the difference $\delta(0) \coloneq h - k$ and the vector $v \coloneq \frac{\phi \,\nabla f}{\sqrt{1+\phi^2|df|_g^2}}$. Then the following identity holds for $\bar{g} = g + \phi^2 df\otimes df$:
\begin{revision}
\begin{equation}\label{eq:BK}
\boxed{\begin{gathered}
\text{\bfseries Generalized Schoen--Yau identity}\quad(t=0)\\[5pt]
\begin{aligned}
R_{\bar g}={}&16\pi\bigl(\mu-J(v)\bigr)
+|\delta(0)|_{\bar g}^2
+2|\delta(0)(v,\cdot)|_{\bar g}^2\\
&-\frac{2}{\phi}\operatorname{div}_{\bar g}
\bigl(\phi\,\delta(0)(v,\cdot)\bigr)
+\bigl(\operatorname{tr}_{\bar g}\delta(0)\bigr)^2\\
&+2\bigl(\operatorname{tr}_{\bar g}k+k(v,v)\bigr)
\operatorname{tr}_{\bar g}\delta(0)
+2v\bigl(\operatorname{tr}_{\bar g}\delta(0)\bigr).
\end{aligned}
\end{gathered}}
\end{equation}
\end{revision}
\end{theorem}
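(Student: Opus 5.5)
The plan is to derive the identity from the Gauss and Codazzi equations for the graph $\Sigma=\{(f(x),x)\}$ inside the static warped product $(\R\times M,\,-\phi^2dt^2+\bar g)$. This is the approach of Schoen--Yau and Bray--Khuri. Since $(M,g)$ is isometric to this graph with second fundamental form $h$, the graph is itself an initial data set $(M,g,h)$ for the warped product spacetime. Its constraint quantities can be computed in two ways: intrinsically, via \eqref{eq:hamiltonian}--\eqref{eq:momentum} with $h$ in place of $k$; and extrinsically, via the ambient Einstein tensor. For a static metric $-\phi^2dt^2+\bar g$, the Einstein tensor is expressed through $R_{\bar g}$ and $\bar\nabla^2\phi/\phi$, $\Delta_{\bar g}\phi/\phi$. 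Evaluating it against the unit normal $N$ of the graph and the tangent directions gives a formula for $16\pi\mu_h-16\pi J_h(v)$ in terms of $R_{\bar g}$ and a divergence term. Concretely, the standard Bray--Khuri computation yields
\[
R_{\bar g}=16\pi(\mu_h-J_h(v))+|h|^2_g-(\mathrm{tr}_gh)^2+\ldots-\frac{2}{\phi}\operatorname{div}_{\bar g}\bigl(\phi\,(h-\ldots)(v,\cdot)\bigr),
\]
where the $\phi$-Hessian terms are absorbed using the fact that the $t$-component of $N$ is proportional to $1/(\phi\sqrt{1+\phi^2|df|^2})$ and $v$ is the $\bar g$-projection direction.

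The key steps, in order, are as follows. First, verify the algebraic facts about $\bar g$: $\bar g^{ij}=g^{ij}-v^iv^j$, $\sqrt{\det\bar g}=W\sqrt{\det g}$, and $\bar g(v,\cdot)$ proportional to $df$. These are recorded in the preliminaries, with $b\nu=\phi\,df$. Second, obtain the ``$k=h$'' version of the identity, expressing $R_{\bar g}$ through $h$ alone. This is Bray--Khuri's identity, which I would quote from \cite{BrayKhuri2010,BrayKhuri2011} or rederive via Gauss--Codazzi as above. Third, substitute $h=k+\delta(0)$ and expand. In $16\pi(\mu_h-J_h(v))$, the terms quadratic in $h$ and the divergence $\operatorname{div}h-d\,\mathrm{tr}h$ split into three parts: the $k$-part $16\pi(\mu-J(v))$, which uses the same $R_g$; cross terms linear in $\delta(0)$; and quadratic terms $|\delta(0)|^2_{\bar g}+2|\delta(0)(v,\cdot)|^2_{\bar g}+(\mathrm{tr}_{\bar g}\delta(0))^2$. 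Converting $g$-traces and norms to $\bar g$-ones via $\bar g^{ij}=g^{ij}-v^iv^j$ is what produces the $v$-weighted terms.

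The main obstacle is the bookkeeping of the linear cross terms $\langle k,\delta(0)\rangle$. These must cancel against the divergence terms, or combine into $2(\mathrm{tr}_{\bar g}k+k(v,v))\mathrm{tr}_{\bar g}\delta(0)+2v(\mathrm{tr}_{\bar g}\delta(0))$. The delicate point is that the extrinsic computation naturally produces $\operatorname{div}_{\bar g}(\phi\,h(v,\cdot))$ rather than $\operatorname{div}_{\bar g}(\phi\,\delta(0)(v,\cdot))$. The difference $\operatorname{div}_{\bar g}(\phi\,k(v,\cdot))$ must be rewritten using the $g$-divergence of $k$, which appears in $J$, together with the relation between $\bar g$- and $g$-connections. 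I would handle this by working in an orthonormal frame adapted to $v$, with $e_n\parallel\nabla f$, where $\bar g$ differs from $g$ only by the factor $W^2$ in the $e_n e_n$ slot, and then checking the cancellation term by term. As a consistency check, setting $\phi\equiv1$ recovers the classical Schoen--Yau identity.
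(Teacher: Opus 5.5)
Your proposal takes essentially the paper's route. The paper does not rederive this identity; it defers to the Gauss--Codazzi computation for the graph of $f$ in the static warped product $(\R\times M,-\phi^2dt^2+\bar g)$ carried out in the appendices of Bray--Khuri, which is what you propose to quote or redo before substituting $h=k+\delta(0)$.

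One correction to your sketch of the intermediate step. The case $\delta(0)=0$ is exactly $R_{\bar g}=16\pi(\mu_h-J_h(v))$, with no extra quadratic, Hessian, or divergence terms. The reason is that the future unit normal is $N=W\phi^{-1}\partial_t+v$, so $N$ minus the pushforward of $v$ equals $(\phi W)^{-1}\partial_t$. The static Einstein tensor then satisfies $G(\partial_t,\partial_t)=\tfrac12\phi^2R_{\bar g}$ and $G(\partial_t,X)=0$ for $X$ tangent to $M$. The divergence in \eqref{eq:BK} therefore enters only in your expansion step, through $\tfrac1\phi\operatorname{div}_{\bar g}\bigl(\phi\,\delta(0)(v,\cdot)\bigr)=\langle h,\delta(0)\rangle_g+(\operatorname{div}_g\delta(0))(v)-\operatorname{tr}_gh\,\delta(0)(v,v)-v\bigl(\delta(0)(v,v)\bigr)$, and this is what cancels the $\langle k,\delta(0)\rangle_g$ cross terms.
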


The proof is a long calculation that Bray and Khuri carry out in the appendices of their paper. 
Suppose we choose $f$ and $\phi$ to satisfy the \textit{generalized Jang equation} 
\begin{equation}\label{eq:GenJang}
    \text{tr}_{\bar{g}}h = \text{tr}_{\bar{g}}k.
\end{equation}
For such $f$ and $\phi$, the generalized Schoen--Yau identity becomes 
\begin{equation}
    R_{\bar{g}} = 16\pi (\mu - J(v)) + |\delta(0)|^2_{\bar{g}} + 2|\delta(0)(v,\cdot)|^2_{\bar{g}} - \frac{2}{\phi}\text{div}_{\bar{g}}(\phi\, \delta(0)(v,\cdot)).
\end{equation} 
The trick to make this applicable to geometric inequalities is the proper choice of second equation for $f$ and $\phi$. The choice applicable to the PMT is simply $\phi =1$. In this case, the identity simplifies to
\begin{equation}
    \rev{R_{\bar{g}} \geq - 2 \operatorname{div}_{\bar{g}}\!\bigl(\delta(0)(v,\cdot)\bigr)}.
\end{equation}
This is \textit{almost} nonnegative scalar curvature. Schoen--Yau then complete the argument by conformally changing the metric $\bar{g}$ to a metric of zero scalar curvature (to which the Riemannian PMT applies) \rev{and ADM energy no greater than that of $\bar g$}. 

Several complications arise when one tries to apply this approach to the spacetime Penrose inequality. First of all, it is unclear what the right choice for $\phi$ is in this case. One cannot use $\phi =1$ since, in the case of equality, we want $(M,g,k)$ to embed isometrically into the Schwarzschild spacetime, which should require $\rev{\phi = \sqrt{1-\frac{2m}{r^{n-2}}}}$. The most obvious choice would be that $\phi$ is chosen to satisfy \begin{equation}\label{eq:zero-div}
\frac{1}{\phi}
\operatorname{div}_{\bar g}
\bigl(
\phi\,\delta(0)(v,\cdot)
\bigr)
=0.
\end{equation} Equation~\eqref{eq:zero-div}, together with the generalized Jang equation
\eqref{eq:GenJang}, forms the Jang zero-divergence system introduced by
Bray--Khuri \rev{\cite{BrayKhuri2010,BrayKhuri2011}}. \rev{However, Jaracz \cite{Jaracz2023} constructed spherically symmetric initial data sets for which no smooth radial solution of this system exists with positive exterior lapse and the asymptotics required for the Penrose application. His theorem does not rule out nonsymmetric solutions.} 

The key observation that allows us to apply this program to prove Theorem~\ref{thm:main} below, is that the left-hand side of \eqref{eq:BK} depends on the 1-form $\beta = \phi \,df$ rather than the functions $f$ and $\phi$ independently. In particular, changing $\tilde{f}= F(f)$ and $\tilde{\phi} = \frac{\phi}{F'(f)}$ \rev{with $F'>0$} leaves $\beta$ invariant and thus \eqref{eq:BK} still holds. This motivates the introduction of a 1-parameter family of deformed
difference tensors.  The special value $t=1$ will eliminate the
dependence on the individual graph variables $f$ and $\phi$, \rev{on the regular graph set}, and leads to
a gauge-invariant formulation of the Jang zero-divergence system.

\subsection{\rev{The New Schoen--Yau Identity (General $t$)}}

The observation in the previous section allows us to extend the
generalized Schoen--Yau identity of Bray--Khuri to
\rev{the new Schoen--Yau identity, a $1$-parameter family}. 

\rev{Suppose throughout that $f,\phi:M\to\mathbb R$ are smooth functions on an initial data set $(M,g,k)$, with $\phi>0$.} We can define the $1$-form
\[
\beta=\phi\,df
\]
as above and note that $\beta\wedge d\beta=0$; therefore $\beta$ is
integrable. We can normalize
\[
\nu=\frac{\beta}{|\beta|_g}
\]
wherever $\beta\neq0$. \rev{Set
$b=|\beta|_g=\phi|df|_g$ and
$W=\sqrt{1+b^2}=\sqrt{1+\phi^2|df|_g^2}$.} We define
\begin{equation}
    \sigma \coloneq k(\nu,\nu)
    -h(\nu,\nu)
\end{equation}
and the deformed difference tensor
\begin{equation}
    \delta(t)\coloneq h-k+t\sigma\,\nu\otimes\nu.
\end{equation}
Thus $\delta(t)$ deforms the normal--normal component of the
difference $h-k$. \rev{The new Schoen--Yau identity below demonstrates} that we can relate the scalar curvature of the
generalized Jang metric
\[
\bar g=g+\beta\otimes\beta
\]
to the deformed difference tensor $\delta(t)$ and the \rev{matter term
$\mu-J(v)$}.
\rev{The interval $t\in[0,1]$ gives the interpolation described in
the introduction; the identity itself holds for every constant
$t\in\mathbb R$.}

\begin{theorem}[\rev{New Schoen--Yau identity, general $t$}]\label{thm:GSY-t-extended}
\rev{For every constant $t\in\mathbb R$ and any smooth functions $f$ and $\phi>0$ on $(M,g,k)$, the following identity holds on the regular graph set $U=\{df\neq0\}=\{\beta\neq0\}$, where $\nu$, $\sigma$, and $\delta(t)$ are defined:}
\begin{revision}
\begin{equation}\label{eq:GSY-param-t}
\boxed{\begin{gathered}
\text{\bfseries New Schoen--Yau identity}\quad(\text{general }t)\\[5pt]
\begin{aligned}
R_{\bar g}={}&16\pi\bigl(\mu-J(v)\bigr)
+|\delta(t)|_{\bar g}^2
+2|\delta(t)(v,\cdot)|_{\bar g}^2\\
&-\frac{2}{\phi}\operatorname{div}_{\bar g}
\bigl(\phi\,\delta(t)(v,\cdot)\bigr)
+\frac{2\sigma^2t(1-t)}{W^2}\\
&+\bigl(\operatorname{tr}_{\bar g}\delta(t)\bigr)^2
+2\bigl(\operatorname{tr}_{\bar g}k+k(v,v)\bigr)
\operatorname{tr}_{\bar g}\delta(t)\\
&+2v\bigl(\operatorname{tr}_{\bar g}\delta(t)\bigr).
\end{aligned}
\end{gathered}}
\end{equation}
\end{revision}
At $t=0$, the correction term vanishes and
$\delta(0)=h-k$, recovering the generalized Schoen--Yau identity
of Theorem~\ref{id:BK}. 
\end{theorem}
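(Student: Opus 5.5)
The plan is to derive \eqref{eq:GSY-param-t} from the $t=0$ identity \eqref{eq:BK} of Theorem~\ref{id:BK}, which holds on all of $M$ and in particular on $U$. Write $A=\nu\otimes\nu$ and $s=t\sigma$, so that $\delta(0)=\delta(t)-sA$. I would substitute this into every term of \eqref{eq:BK} containing $\delta(0)$ and show that the resulting discrepancy is exactly $2\sigma^2t(1-t)/W^2$.

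The first step collects the $\bar g$-algebra of $\nu$. Using $\bar g^{ij}=g^{ij}-v^iv^j$ and $v=(b/W)\nu^\sharp$, the $\bar g$-dual of the 1-form $\nu$ is $\nu^{\sharp}/W^2$, and $|\nu|_{\bar g}^2=1/W^2$. Hence:
\begin{itemize}
\item $\operatorname{tr}_{\bar g}A=W^{-2}$ and $\langle\delta,A\rangle_{\bar g}=W^{-4}\delta(\nu,\nu)$;
\item $A(v,\cdot)=(b/W)\nu$, whose $\bar g$-dual vector is $v/W^2$.
\end{itemize}
With these, I would expand the algebraic terms:
\begin{itemize}
\item $|\delta(0)|^2_{\bar g}=|\delta(t)|^2_{\bar g}-2sW^{-4}\delta(t)(\nu,\nu)+s^2W^{-4}$;
\item $2|\delta(0)(v,\cdot)|_{\bar g}^2=2|\delta(t)(v,\cdot)|_{\bar g}^2-4s\,b^2W^{-6}\delta(t)(\nu,\nu)+2s^2b^2W^{-6}$;
\item $\operatorname{tr}_{\bar g}\delta(0)=\operatorname{tr}_{\bar g}\delta(t)-sW^{-2}$, which feeds into the square term and into the term multiplied by $\operatorname{tr}_{\bar g}k+k(v,v)$.
\end{itemize}
Throughout I would use $\delta(t)(\nu,\nu)=(t-1)\sigma$, which is immediate from $\delta(0)(\nu,\nu)=-\sigma$.

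The two derivative terms do not cancel separately, and combining them is the main point. The divergence term acquires the extra piece $+\tfrac{2}{\phi}\operatorname{div}_{\bar g}(\phi\,s\,v/W^2)$, while $2v(\operatorname{tr}_{\bar g}\delta)$ acquires $-2v(s/W^2)$. All derivatives of $s/W^2$ cancel between these two, leaving only the undifferentiated combination
\[
\frac{2s}{W^2}\cdot\frac1\phi\operatorname{div}_{\bar g}(\phi v).
\]
I would then use the standard graph identity from the Bray--Khuri computation: $\tfrac1\phi\operatorname{div}_{\bar g}(\phi v)$ equals $\operatorname{tr}_{\bar g}h$, possibly up to a correction expressible through $h(v,v)$. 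I would verify this directly from the formula for $h$ together with $\sqrt{\det\bar g}=W\sqrt{\det g}$. Thus no derivatives of $\sigma$ survive.

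The final step is bookkeeping. All terms linear in $s$ combine into
\[
\frac{2s}{W^2}\Bigl(\operatorname{tr}_{\bar g}h-\operatorname{tr}_{\bar g}\delta(t)-\operatorname{tr}_{\bar g}k-k(v,v)\Bigr)
\]
plus the $\delta(t)(\nu,\nu)$ terms above. Substituting $\operatorname{tr}_{\bar g}\delta(t)=\operatorname{tr}_{\bar g}h-\operatorname{tr}_{\bar g}k+s/W^2$, $k(v,v)=(b^2/W^2)k(\nu,\nu)$, $1+b^2=W^2$ and $\delta(t)(\nu,\nu)=(t-1)\sigma$ should leave a multiple of $\sigma^2/W^2$, once the $k(\nu,\nu)$ and $h(\nu,\nu)$ pieces are recombined into $\sigma$. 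That multiple must match $-2t(1-t)$ after moving it to the other side. The main obstacle is getting the weights $W^{-2},W^{-4},W^{-6}$ and the $h(v,v)$ versus $k(v,v)$ terms to collapse exactly. As a safeguard I would check the final coefficient on the flat case $g=\delta$, $\phi=1$, $f$ linear (so $h=0$ and $W$ is constant) with $k=c\,\nu\otimes\nu$. There every derivative term vanishes and both sides are explicit polynomials in $c$ and $t$.
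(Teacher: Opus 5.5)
Your strategy is sound, and it is essentially the paper's argument in integrated form. The paper instead differentiates the right-hand side of \eqref{eq:GSY-param-t} in $t$ and shows the derivative vanishes (Identities 1--8 in the appendix). Since everything is at most quadratic in $t$, the two routes are equivalent. Your Leibniz-rule cancellation of the two $v(s/W^2)$ terms is correct, and slightly cleaner than the paper's Identity 8.

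However, one of your explicit expansions is wrong, and as written the bookkeeping cannot close. The $\bar g$-dual of the $1$-form $\tfrac bW\nu$ is $v/W^2$. Hence $\langle\delta(t)(v,\cdot),\tfrac bW\nu\rangle_{\bar g}=W^{-2}\delta(t)(v,v)=b^2W^{-4}\delta(t)(\nu,\nu)$ and $|\tfrac bW\nu|_{\bar g}^2=b^2W^{-4}$, so
\[
2|\delta(0)(v,\cdot)|_{\bar g}^2=2|\delta(t)(v,\cdot)|_{\bar g}^2-4sb^2W^{-4}\,\delta(t)(\nu,\nu)+2s^2b^2W^{-4}.
\]
The weight is $W^{-4}$, not $W^{-6}$. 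With your weights the discrepancy is off by $2t(t-2)b^4\sigma^2/W^6$. At $t=1$ this equals $-2b^4\sigma^2/W^6\neq0$, so the computation would appear to refute the identity. Your flat test case with $b\neq0$ would expose exactly this. Your expansions of $|\delta(0)|_{\bar g}^2$, of the trace, and of the divergence term are correct.

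The graph identity must also be used in its exact form:
\[
\frac1\phi\operatorname{div}_{\bar g}(\phi v)=\operatorname{tr}_g h=\operatorname{tr}_{\bar g}h+h(v,v).
\]
It follows from $\operatorname{div}_{\bar g}X=\operatorname{div}_gX+X(\log W)$ and the formula for $h$; this is the content of the paper's Identities 6 and 7 combined. The $h(v,v)$ term is not an optional correction: it supplies the $h(\nu,\nu)$ needed to assemble $\sigma$. So the bracket in your linear terms should contain $\operatorname{tr}_g h$ rather than $\operatorname{tr}_{\bar g}h$, and it then equals $h(v,v)-k(v,v)-s/W^2=-(b^2\sigma+s)/W^2$.

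With both corrections, use $s=t\sigma$, $\delta(t)(\nu,\nu)=(t-1)\sigma$ and $W^2=1+b^2$. The total discrepancy (the $t=0$ right-hand side minus the uncorrected general-$t$ right-hand side) is then
\[
-\frac{2s(1+2b^2)(t-1)\sigma}{W^4}+\frac{2s^2}{W^2}-\frac{2s(b^2\sigma+s)}{W^4}=\frac{2t(1-t)\sigma^2}{W^2}.
\]
This is exactly the correction term, so once these two points are fixed your proof goes through.
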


\begin{proof}
The proof is obtained by differentiating the right-hand side of
\eqref{eq:GSY-param-t} with respect to $t$ and showing that it is
independent of $t$. The complete calculation is given in
Appendix~\ref{app:one-parameter-identity}.
\end{proof}

\subsection{\rev{The Gauge-Invariant Schoen--Yau Identity ($t=1$)}}

\begin{revision}
On $\{\beta\neq0\}$, the gauge-invariant Schoen--Yau identity ($t=1$) and its separate terms depend only on $g$, $k$, and $\beta=\phi\,df$, so they are invariant under graph reparametrization. To state the intrinsic identity also at zeros, specify a smooth integrable unit 1-form $\nu$ and a smooth signed function $b$, and put $W=\sqrt{1+b^2}$. Define directly
\begin{equation}\label{eq:Delta-intrinsic}
\Delta\coloneq
\frac{b}{W}P-k+k(\nu,\nu)\nu\otimes\nu
+\frac{1}{W}\bigl(\nu\otimes d_\Sigma b+d_\Sigma b\otimes\nu\bigr),
\end{equation}
where $P$ is the leafwise second fundamental form extended by zero in the normal direction. This tensor is smooth for the specified $(\nu,b)$, including at $b=0$, and agrees with $\delta(1)$ on the regular graph set. At zeros of $\beta$, the separate terms can depend on the chosen foliation; they are not determined by $\beta$ alone.
\end{revision}

\begin{theorem}[\rev{Gauge-invariant Schoen--Yau identity, $t=1$}]
\label{thm:gauge-invariant-new}
    Let $(M, g, k)$ be an initial data set.
Let $\nu$ be a smooth $1$-form satisfying
\[
|\nu|_g=1,
\qquad
\nu\wedge d\nu=0,
\]
and let $\Sigma$ denote the leaves of the codimension-one foliation
tangent to $\ker\nu$.  Denote the mean curvature of the leaves by 
\begin{equation}
    H_{\Sigma} = \operatorname{div}_g(\nu)
\end{equation}  
and the geodesic curvature by  
\begin{equation}
    \kappa = \nabla_{\nu}\nu.
\end{equation}  
Suppose $b$ is a smooth scalar function and define $W = (1 + b^2)^{1/2}$ and $\bar{g} = g + b^2\, \nu \otimes \nu$. \rev{With $\Delta$ defined by \eqref{eq:Delta-intrinsic}, the following identity holds throughout the domain of the smooth pair $(\nu,b)$, including where $b=0$:}
\begin{revision}
\begin{equation}\label{eq:main-identity}
\boxed{\begin{gathered}
\text{\bfseries Gauge-invariant Schoen--Yau identity}\quad(t=1)\\[5pt]
\begin{aligned}
R_{\bar g}={}&16\pi\left(\mu-J\left(\frac bW\nu\right)\right)
+|\Delta|_g^2-\frac2W \rev{\mathrm{II}}\\
&+\rev{\mathrm{I}}^2+2\operatorname{tr}_g k\,\rev{\mathrm{I}}
+\frac{2b}{W}\nu(\rev{\mathrm{I}}).
\end{aligned}
\end{gathered}}
\end{equation}
\end{revision}
where \begin{equation}
    \rev{\mathrm{I}} \coloneq \frac{b}{W}\, H_\Sigma - \operatorname{tr}_\Sigma k
\end{equation} and \begin{equation}
\rev{\mathrm{II}} \coloneq
2\left\langle
d_\Sigma b,
\Delta(\nu,\cdot)|_\Sigma
\right\rangle_g
+
b\left(
\operatorname{div}_\Sigma
\bigl(\Delta(\nu,\cdot)|_\Sigma\bigr)
-
2\Delta(\nu,\kappa)
\right).
\end{equation}  Every term in \eqref{eq:main-identity} depends only on $g$, $k$, $\nu$, and $b$. \end{theorem}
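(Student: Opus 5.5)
Plan: derive \eqref{eq:main-identity} from the $t=1$ case of Theorem~\ref{thm:GSY-t-extended} on the regular set, then extend it across $\{b=0\}$ by a continuity/density argument, with a direct computation as backup.

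\emph{Step 1: local graph realization.} Near any point of the domain where $b\neq0$ (say $b>0$; the case $b<0$ follows by replacing $\nu$ with $-\nu$), I choose a local defining function $r$ with $\nu=dr/|dr|_g$, as in Lemma~\ref{lem:kappa-log-lambda}. Set $f=r$ and $\phi=b/\lambda$, where $\lambda=|dr|_g$. Then $\phi>0$ and $\beta=\phi\,df=b\nu$, so $\bar g=g+b^2\nu\otimes\nu$ and $v=\frac bW\nu$. Theorem~\ref{thm:GSY-t-extended} at $t=1$ therefore applies; its correction term $2\sigma^2t(1-t)/W^2$ vanishes.

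\emph{Step 2: express each term intrinsically.}
\begin{itemize}
\item Compute $h$ from $\nabla^2 f=\nabla(\lambda\nu)=\lambda\nabla\nu+d\lambda\otimes\nu$ and $d\phi=\lambda^{-1}db-b\lambda^{-2}d\lambda$.
\item Decompose $\nabla\nu=P+\nu\otimes\kappa$ (up to index convention) using \eqref{eq:P}. Lemma~\ref{lem:kappa-log-lambda}, $\kappa=d_\Sigma\log\lambda$, cancels the $\lambda$-dependence in the mixed components. I then expect
\[
h=\frac{b}{W}P+\frac{1}{W}\bigl(\nu\otimes d_\Sigma b+d_\Sigma b\otimes\nu\bigr)+(\ast)\,\nu\otimes\nu.
\]
\item Replacing the $\nu\nu$-component by $k(\nu,\nu)$ gives $\delta(1)=\Delta$.
\item Since $\Delta(\nu,\nu)=0$ and $v\parallel\nu$, we get $\bar g^{ij}\Delta_{ij}=g^{ij}\Delta_{ij}=\frac bW H_\Sigma-\operatorname{tr}_\Sigma k=\mathrm I$. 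Likewise $\operatorname{tr}_{\bar g}k+k(v,v)=\operatorname{tr}_g k$ and $2v(\mathrm I)=\frac{2b}{W}\nu(\mathrm I)$.
\item For $|\Delta|^2_{\bar g}+2|\Delta(v,\cdot)|^2_{\bar g}$, decompose into tangential and mixed parts with $\bar g^{-1}=g^{-1}-v\otimes v$. The mixed part contributes $2|\Delta(\nu,\cdot)|_\Sigma|^2(1-|v|^2)$ and the correction contributes $2|v|^2|\Delta(\nu,\cdot)|^2(1-|v|^2)$. I expect the total to reduce to $|\Delta|_g^2$ after the divergence term absorbs the leftover pieces, so these must be tracked jointly with Step 3.
\end{itemize}

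\emph{Step 3 (main obstacle): the divergence term.} The term $-\frac2\phi\operatorname{div}_{\bar g}(\phi\,\Delta(v,\cdot))$ is a priori gauge dependent through $\phi=b/\lambda$. Write $X=\frac bW\Delta(\nu,\cdot)^\sharp$ (raised by $\bar g$, which agrees with $g$ on the tangential part), which is a leafwise vector field, and use $\sqrt{\det\bar g}=W\sqrt{\det g}$ to get
\[
\operatorname{div}_{\bar g}(\phi X)=W^{-1}\operatorname{div}_g(W\phi X).
\]
Converting with \eqref{eq:leaf-div} yields a $-g(X,\kappa)$ term, while $\phi^{-1}X(\phi)$ produces $X(\log b)-X(\log\lambda)=X(\log b)-g(X,\kappa)$. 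The $\lambda$-dependence thus collapses into the two $\kappa$-terms, giving $-2b\,\Delta(\nu,\kappa)$. The $X(\log b)$ and $X(W)$ terms combine with the surplus from Step 2 to give $\frac2W\cdot 2\langle d_\Sigma b,\Delta(\nu,\cdot)\rangle$, which is the form of $\mathrm{II}$. I would carry out this bookkeeping carefully, checking the powers of $W$, and I expect this to be the delicate part.

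\emph{Step 4: extension to $\{b=0\}$.} Both sides of \eqref{eq:main-identity} are smooth functions of $(g,k,\nu,b)$ and their derivatives; $\Delta$ is smooth as noted after \eqref{eq:Delta-intrinsic}. They therefore agree on the closure of $\{b\neq0\}$. On the interior of $\{b=0\}$ we have $\bar g=g$ and the identity reduces to checking the constraint \eqref{eq:hamiltonian} with $\Delta=-k+k(\nu,\nu)\nu\otimes\nu$ and $\mathrm{II}=0$. This is a short algebraic check:
\[
|\Delta|^2+\mathrm I^2+2\operatorname{tr}k\,\mathrm I=|k|^2-(\operatorname{tr}k)^2.
\]
A cleaner alternative is to apply the identity to $b+\epsilon$ and let $\epsilon\to0$: for small $\epsilon>0$ the set $\{b+\epsilon\neq0\}$ is dense near any compact piece, and smooth dependence on $b$ lets the identity pass to the limit.
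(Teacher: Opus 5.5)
Your route is the paper's: realize $(\nu,b)$ locally as a graph with $f=r$, $\phi=b/\lambda$, specialize Theorem~\ref{thm:GSY-t-extended} to $t=1$, convert term by term, then pass to $\{b=0\}$. The gap is in the bookkeeping of Steps~2--3, which as you describe it would not close. Put $Q=\Delta(\nu,\cdot)|_\Sigma$. Since $\Delta(\nu,\nu)=0$, the covector $\Delta(v,\cdot)=\frac bW Q$ is tangential. Also $\bar g^{-1}=g^{-1}-v\otimes v$ with $v\parallel\nu$ agrees with $g^{-1}$ on tangential covectors. Hence $2|\Delta(v,\cdot)|_{\bar g}^2=2|v|^2|Q|_g^2$, not $2|v|^2|Q|^2(1-|v|^2)$. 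With the correct value the quadratic group closes by itself:
\[
|\Delta|_{\bar g}^2+2|\Delta(v,\cdot)|_{\bar g}^2
=|\Delta|_{T\Sigma}^2+\frac{2}{W^2}|Q|_g^2+\frac{2b^2}{W^2}|Q|_g^2
=|\Delta|_g^2,
\]
which is \eqref{eq:Delta-norm-conversion}. So there is no surplus. Your mechanism for disposing of one could not work in any case. For fixed $(g,\nu,b)$, the form $Q=\frac{d_\Sigma b}{W}-k(\nu,\cdot)|_\Sigma$ can be changed arbitrarily through $k$, and every other conversion (the $\mu$, $J$ and trace terms) is exact. The divergence term is linear in $Q$, while your leftover, $-2|v|^4|Q|_g^2$, is quadratic. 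Scaling $Q\mapsto sQ$ therefore forces the two discrepancies to vanish separately, so the divergence term cannot absorb anything.

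Correspondingly, the divergence term equals $\mathrm{II}/W$ on its own. With $X=\frac bW Q$, the $Q(b)$-contributions to $\frac1\phi\operatorname{div}_{\bar g}(\phi X)$ are:
\begin{itemize}
\item $Q(b/W)=Q(b)/W^3$, from expanding $\operatorname{div}_\Sigma X$ (your sketch omits this one);
\item $X(\log W)=b^2Q(b)/W^3$;
\item $X(\log b)=Q(b)/W$.
\end{itemize}
These sum to $2Q(b)/W$. Together with $\frac bW\bigl(\operatorname{div}_\Sigma Q-2Q(\kappa)\bigr)$ from your $\kappa$-bookkeeping, this gives exactly $\mathrm{II}/W$. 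The two groups decouple, and once the norm computation is corrected your plan is the paper's appendix argument.

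Two minor points. For $b<0$ you must reverse $\nu$ and $b$ simultaneously: $\Delta$, $\mathrm I$ and $\mathrm{II}$ are invariant under $(\nu,b)\mapsto(-\nu,-b)$, but not under $\nu\mapsto-\nu$ alone. Both of your Step~4 arguments are valid. The $b+\varepsilon$ limit is the paper's; what matters is that $b+\varepsilon\neq0$ on a neighborhood of $p$ for small $\varepsilon\neq0$. Your alternative is equally good: continuity on $\overline{\{b\neq0\}}$, plus your direct check on the interior of $\{b=0\}$, where indeed $|\Delta|_g^2+\mathrm I^2+2\operatorname{tr}_gk\,\mathrm I=|k|_g^2-(\operatorname{tr}_gk)^2$.
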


\begin{proof}
\begin{revision}
Where $b\neq0$, specialize the new Schoen--Yau identity to $t=1$ and rewrite it in terms of the intrinsic pair $(\nu,b)$, as in Appendix~\ref{app:gauge-invariant-identity}. At a point $p$ with $b(p)=0$, apply this already established identity to $b_\varepsilon=b+\varepsilon$ on a neighborhood of $p$ on which $b_\varepsilon\neq0$, and let $\varepsilon\to0$. All intrinsic terms, and the scalar curvature of $g+b_\varepsilon^2\nu\otimes\nu$, converge smoothly at $p$. This proves the identity even when the zero set of $b$ has interior.
\end{revision}
\end{proof}

\begin{remark}
Although the expression for $\rev{\mathrm{II}}$ is somewhat lengthy, it only depends
on the mixed normal--tangential component of the discrepancy tensor
$\Delta$.  In fact,
\[
\Delta(\nu,\cdot)|_\Sigma
=
\frac{d_\Sigma b}{W}
-
k(\nu,\cdot)|_\Sigma.
\]
This observation will be useful in the next section: the maximally
parameterized form of $k$ is chosen precisely so that this mixed
component is represented by the tangential $1$-form $Q$, reducing the
condition $\rev{\mathrm{II}}=0$ to a single weighted leafwise divergence equation.
\end{remark}

\rev{The gauge-invariant Schoen--Yau identity motivates the following
\emph{gauge-invariant Jang zero-divergence system}. We use the notation
$\mathrm{I}$ and $\mathrm{II}$ throughout for the two expressions defined
above. The system $\mathrm{I}=0$, $\mathrm{II}=0$ is explicitly}
\begin{revision}
\begin{equation}\label{eq:gauge-system}
\boxed{\begin{gathered}
\text{\bfseries Gauge-invariant Jang zero-divergence system}\\[5pt]
\begin{aligned}
\mathrm{I}&=\frac bW H_\Sigma-\operatorname{tr}_\Sigma k=0,\\[3pt]
\mathrm{II}&=2\left\langle d_\Sigma b,
\Delta(\nu,\cdot)|_\Sigma\right\rangle_g\\
&\quad+b\left(\operatorname{div}_\Sigma
\bigl(\Delta(\nu,\cdot)|_\Sigma\bigr)
-2\Delta(\nu,\kappa)\right)=0.
\end{aligned}
\end{gathered}}
\end{equation}
\end{revision}
We refer to \rev{$\mathrm{I}=0$} as the \emph{trace equation} and to \rev{$\mathrm{II}=0$} as the
\emph{zero-divergence equation}.  When both equations hold, the
\rev{gauge-invariant Schoen--Yau identity} reduces to
\[
R_{\bar g}
=
16\pi\left(
\mu-J\left(\frac{b}{W}\nu\right)
\right)
+
|\Delta|_g^2.
\]
Since
\[
\left|\frac{b}{W}\nu\right|_g<1
\]
where $b$ is finite, the dominant energy condition implies
\[
R_{\bar g}\ge0.
\]
Thus, solving the gauge-invariant Jang zero-divergence equations
produces a Riemannian comparison metric
\[
\bar g=g+b^2\nu\otimes\nu
\]
with nonnegative scalar curvature.

In the following section, we provide an explicit parameterization of $k$ satisfying $\rev{\mathrm{I}} =0$ and $\rev{\mathrm{II}} =0$, motivating the distinguished class of initial data for which the Penrose inequality is satisfied. 

\section{Maximally Parameterized Initial Data}

The equations $\rev{\mathrm{I}}=0$ and $\rev{\mathrm{II}}=0$ constrain only two pieces of $k$
relative to the foliation: its tangential trace and one scalar condition
on its mixed normal--tangential component.  This makes the system
particularly amenable to an explicit parametrization.

Any symmetric $2$-tensor decomposes uniquely as
\begin{equation}\label{eq:kdecomp}
k
=
A\,\nu\otimes\nu
+
B\,\gamma
+
C\otimes\nu+\nu\otimes C
+
\hat{k},
\end{equation}
where $A,B$ are scalar functions, $C$ is tangential, and $\hat{k}$
is symmetric, tangential, and trace-free with respect to $\gamma$.
The trace equation $\rev{\mathrm{I}}=0$ determines the tangential trace
\begin{equation}\label{eq:Bsolution}
B=\frac{bH_\Sigma}{(n-1)W}.
\end{equation}
For the mixed component, recall that
\[
\Delta(\nu,\cdot)|_\Sigma
=
\frac{d_\Sigma b}{W}
-
k(\nu,\cdot)|_\Sigma
=
\frac{d_\Sigma b}{W}-C.
\]
We therefore introduce the tangential $1$-form
\begin{equation}\label{eq:Qdef}
Q
\coloneq
\Delta(\nu,\cdot)|_\Sigma
=
\frac{d_\Sigma b}{W}-C.
\end{equation}
Equivalently,
\begin{equation}\label{eq:Csolution}
C=\frac{d_\Sigma b}{W}-Q.
\end{equation}
With this notation, the zero-divergence equation $\rev{\mathrm{II}}=0$ becomes
\begin{equation}\label{eq:Qeq}
2\langle d_\Sigma b,Q\rangle_g
+
b\bigl(\operatorname{div}_\Sigma Q-2Q(\kappa)\bigr)
=0.
\end{equation} 
These observations motivate the following definition.

\begin{definition}[Maximally Parameterized Presentation]
    Let $(M,g)$ be a complete Riemannian manifold. For an \rev{open subset} $U\subset M^\circ$, a \textbf{maximally parameterized presentation} on $U$ consists of data $(\nu,b,A,Q,\hat{k})$ with the following properties. 
    \begin{enumerate}
        \item $\nu\in \Omega^1(U)$ is a smooth $1$-form satisfying $|\nu|_g=1$ and $\nu\wedge d\nu =0$. 
        \item $b,A\in C^\infty(U)$ are arbitrary smooth, signed scalar functions. 
        \item $Q\in \Omega^1(U)$ is a smooth $1$-form tangent to the \rev{leaves} of the foliation determined by $\nu$: $Q(\nu) =0$. Moreover, $Q$ satisfies the weighted leafwise divergence equation \begin{equation}
            2\langle d_{\Sigma}b,Q\rangle_g + b(\text{div}_\Sigma Q - 2Q(\kappa)) =0. 
        \end{equation}
        \item $\hat{k}\in \Gamma(S^2T^*U)$ is a smooth symmetric 2-tensor that is tangential and tangentially trace free: $\hat{k}(\nu,\cdot) =0$ and $\text{tr}_\Sigma \hat{k}=0$. 
    \end{enumerate} From this data, one can build the symmetric $2$-tensor \begin{equation}\label{eq:kMP}
k_{MP}
=
A\,\nu\otimes\nu
+
\frac{bH_\Sigma}{(n-1)W}\gamma
+
\left(
\frac{d_\Sigma b}{W}-Q
\right)\otimes\nu
+
\nu\otimes
\left(
\frac{d_\Sigma b}{W}-Q
\right)
+
\hat{k},
\end{equation} 
where $\gamma \coloneq g - \nu\otimes \nu$ is the leafwise metric. 
\end{definition} 

The following proposition demonstrates the utility of maximally parameterized presentations, namely, they yield initial data sets $(M,g,k)$ satisfying the \rev{gauge-invariant} Jang zero-divergence equations. 

\begin{proposition}\label{prop:MP-zero-divergence}
Every maximally parameterized presentation satisfies the
gauge-invariant Jang zero-divergence equations
\[
\rev{\mathrm{I}=0,\qquad\mathrm{II}=0.}
\]
\end{proposition}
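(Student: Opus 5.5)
The argument is a direct substitution of $k=k_{MP}$ from \eqref{eq:kMP} into the definitions of $\mathrm{I}$ and $\mathrm{II}$, using the decomposition \eqref{eq:kdecomp} with the particular components $A$, $B=\frac{bH_\Sigma}{(n-1)W}$, $C=\frac{d_\Sigma b}{W}-Q$, and $\hat k$. Throughout we use the same intrinsic pair $(\nu,b)$ as in Theorem~\ref{thm:gauge-invariant-new}. Two algebraic facts carry the proof: the tangential trace of $k_{MP}$, and the mixed normal--tangential component of $\Delta$. The remaining terms in $k_{MP}$ ($A\,\nu\otimes\nu$ and $\hat k$) do not enter either equation.

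\emph{Trace equation.} We compute $\operatorname{tr}_\Sigma k_{MP}=\operatorname{tr}_g k_{MP}-k_{MP}(\nu,\nu)$ term by term.
\begin{itemize}
\item $A\,\nu\otimes\nu$ contributes only to $k(\nu,\nu)$, so it cancels.
\item The mixed terms $C\otimes\nu+\nu\otimes C$ have zero $g$-trace, since $C(\nu)=0$, and they vanish on $(\nu,\nu)$.
\item $\hat k$ is tangential with $\operatorname{tr}_\Sigma\hat k=0$.
\item The term $B\gamma$ contributes $B\operatorname{tr}_g\gamma=(n-1)B$, because $\operatorname{tr}_g\gamma=n-1$ and $\gamma(\nu,\nu)=0$.
\end{itemize}
Hence $\operatorname{tr}_\Sigma k_{MP}=(n-1)B=\frac{b}{W}H_\Sigma$, where we use $H_\Sigma=\operatorname{tr}_gP$. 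This gives $\mathrm{I}=0$ identically.

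\emph{Zero-divergence equation.} We evaluate $\Delta(\nu,\cdot)|_\Sigma$ from \eqref{eq:Delta-intrinsic}. Let $X$ be a tangential vector.
\begin{itemize}
\item $P(\nu,X)=0$, since $P\nu=0$.
\item $k(\nu,\nu)\,\nu\otimes\nu$ vanishes on $(\nu,X)$.
\item $\frac1W(\nu\otimes d_\Sigma b+d_\Sigma b\otimes\nu)(\nu,X)=\frac1W\,d_\Sigma b(X)$, because $d_\Sigma b(\nu)=0$ and $|\nu|_g=1$.
\end{itemize}
Therefore $\Delta(\nu,\cdot)|_\Sigma=\frac{d_\Sigma b}{W}-k(\nu,\cdot)|_\Sigma$, as recorded in the remark after Theorem~\ref{thm:gauge-invariant-new}. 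For $k_{MP}$ we have $k_{MP}(\nu,X)=C(X)$, because $\gamma$, $\hat k$ and $A\,\nu\otimes\nu$ give nothing on $(\nu,X)$. So $\Delta(\nu,\cdot)|_\Sigma=\frac{d_\Sigma b}{W}-C=Q$.

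We also need $\Delta(\nu,\kappa)$. Since $g(\kappa,\nu)=0$, the vector $\kappa$ is tangential, and so $\Delta(\nu,\kappa)=Q(\kappa)$. Substituting into the definition of $\mathrm{II}$ gives
\[
\mathrm{II}=2\langle d_\Sigma b,Q\rangle_g+b\bigl(\operatorname{div}_\Sigma Q-2Q(\kappa)\bigr).
\]
This vanishes by the defining weighted divergence equation in item (3) of the definition.

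The only point that needs care is bookkeeping, not analysis. One must check that the trace and mixed-component computations use only the orthogonality relations $P\nu=0$, $C(\nu)=Q(\nu)=0$, $\hat k(\nu,\cdot)=0$ and $g(\kappa,\nu)=0$. One must also check that these hold on all of $U$, including where $b=0$. There nothing degenerates, because $\Delta$, $\mathrm{I}$ and $\mathrm{II}$ are defined intrinsically from the smooth pair $(\nu,b)$ with $W\geq1$.
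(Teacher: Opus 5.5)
Your proposal is correct and follows the same approach as the paper. You take the tangential trace of $k_{MP}$ to get $\mathrm{I}=0$, then identify $\Delta(\nu,\cdot)|_\Sigma=Q$ so that $\mathrm{II}$ reduces to the defining weighted divergence equation. Your extra bookkeeping makes explicit two steps the paper leaves implicit: reading off the mixed component directly from \eqref{eq:Delta-intrinsic}, and noting that $\kappa$ is tangential so $\Delta(\nu,\kappa)=Q(\kappa)$.
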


\begin{proof}
Taking the tangential trace of \eqref{eq:kMP} gives
\[
\operatorname{tr}_\Sigma k_{MP}
=
\frac{bH_\Sigma}{(n-1)W}
\operatorname{tr}_\Sigma\gamma
=
\frac{bH_\Sigma}{W},
\]
since $\operatorname{tr}_\Sigma\gamma=n-1$.  Hence
\[
\rev{\mathrm{I}}
=
\frac{b}{W}H_\Sigma
-
\operatorname{tr}_\Sigma k_{MP}
=
0.
\]

For the mixed normal--tangential component, \eqref{eq:kMP} gives
\[
k_{MP}(\nu,\cdot)|_\Sigma
=
\frac{d_\Sigma b}{W}-Q.
\]
Therefore
\[
\Delta(\nu,\cdot)|_\Sigma
=
\frac{d_\Sigma b}{W}
-
k_{MP}(\nu,\cdot)|_\Sigma
=
Q.
\]
Substituting this into the definition of $\rev{\mathrm{II}}$ yields
\[
\rev{\mathrm{II}}
=
2\langle d_\Sigma b,Q\rangle_g
+
b\bigl(
\operatorname{div}_\Sigma Q
-
2Q(\kappa)
\bigr),
\]
which vanishes by the defining equation for $Q$.
\end{proof}

We have the following immediate corollary which is essential to the proof of the Penrose inequality given in the forthcoming sections. 

\begin{corollary}\label{cor:MP-local-scalar}
Let $(\nu,b,A,Q,\hat{k})$ be a maximally parameterized
presentation on an open set $U\subset M^\circ$, and let
$k=k_{MP}$ on $U$.  If the initial data $(g,k)$ satisfy the dominant
energy condition on $U$, then the comparison metric
\[
\bar g
=
g+b^2\nu\otimes\nu
\]
satisfies
\[
R_{\bar g}\geq0
\qquad\text{on }U.
\]
\end{corollary}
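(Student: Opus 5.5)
The corollary follows by combining Proposition~\ref{prop:MP-zero-divergence} with the gauge-invariant Schoen--Yau identity of Theorem~\ref{thm:gauge-invariant-new}, applied to the pair $(\nu,b)$ from the presentation. First I will check that the hypotheses of that theorem hold on $U$. By definition of a maximally parameterized presentation, $\nu$ is a smooth unit $1$-form with $\nu\wedge d\nu=0$, and $b$ is a smooth signed function. So $W=(1+b^2)^{1/2}$, $\bar g=g+b^2\nu\otimes\nu$, and the tensor $\Delta$ of \eqref{eq:Delta-intrinsic} are all defined and smooth on $U$, including on the zero set of $b$. The identity \eqref{eq:main-identity} therefore holds pointwise throughout $U$. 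Here $\mu$ and $J$ are computed from $(g,k_{MP})$ through \eqref{eq:hamiltonian} and \eqref{eq:momentum}.

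Next, Proposition~\ref{prop:MP-zero-divergence} gives $\mathrm{I}=0$ and $\mathrm{II}=0$ identically on $U$. The terms $\mathrm{I}^2$, $2\operatorname{tr}_g k\,\mathrm{I}$, $-\tfrac{2}{W}\mathrm{II}$ and $\tfrac{2b}{W}\nu(\mathrm{I})$ all vanish. The term $\nu(\mathrm{I})$ vanishes because $\mathrm{I}$ is zero on the whole open set $U$, not just at a single point, so its derivatives vanish too. The identity then reduces to
\[
R_{\bar g}=16\pi\Bigl(\mu-J\bigl(\tfrac{b}{W}\nu\bigr)\Bigr)+|\Delta|_g^2 .
\]

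Finally, I bound the matter term. Since $|\nu|_g=1$, we have $|\tfrac bW\nu|_g=|b|/W<1$. Cauchy--Schwarz and the dominant energy condition then give $J(\tfrac bW\nu)\le |J|_g\,|b|/W\le |J|_g\le\mu$. The matter term is therefore nonnegative, and $|\Delta|_g^2\ge0$, so $R_{\bar g}\ge0$ on $U$.

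There is no real obstacle here. The only delicate point is the validity of the identity on $\{b=0\}$, and Theorem~\ref{thm:gauge-invariant-new} already establishes it there through the intrinsic definition of $\Delta$ and the $b+\varepsilon$ approximation. All that remains is to confirm that the presentation data have the smoothness that theorem requires.
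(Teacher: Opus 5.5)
Your proposal is correct and matches the paper's argument. Proposition~\ref{prop:MP-zero-divergence} gives $\mathrm{I}=\mathrm{II}=0$ on $U$, so the gauge-invariant identity \eqref{eq:main-identity} collapses to $R_{\bar g}=16\pi(\mu-J(\tfrac bW\nu))+|\Delta|_g^2$, and $|\tfrac bW\nu|_g<1$ together with the dominant energy condition finishes the proof; your remark that $\nu(\mathrm{I})$ vanishes because $\mathrm{I}\equiv0$ on the open set $U$ makes explicit a step the paper leaves implicit.
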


With these results in hand, we can define the class of initial data for which we can prove the Penrose inequality. 

\begin{definition}[Maximally Parameterized Initial Data]\label{def:MP-initial-data}
A smooth initial data set $(M,g,k)$ is \emph{maximally parameterized}
if there exists a closed set
\[
\mathcal{S}\subset M^\circ
\]
with empty interior, and a maximally parameterized presentation
$(\nu,b,A,Q,\hat{k})$
on
$U=M^\circ\setminus \mathcal{S}$
such that:
\begin{enumerate}
    \item the $1$-form
    \[
    \beta\coloneq b\nu
    \]
    extends smoothly across $\mathcal{S}$ to a smooth $1$-form on $M^\circ$;

    \item the tensor determined by the presentation satisfies
    \[
    k_{MP}=k|_U.
    \]
\end{enumerate}
\end{definition}

\begin{remark}
    The set $\mathcal S$ records singularities which may be present in the foliation from the maximally parameterized presentation, not singularities in the initial data. \rev{For multiple boundary components, a foliation by compact leaves
connecting separate near-boundary leaves to connected leaves near
infinity must undergo topology change; singularities of such a
foliation are therefore natural.}  The scalar curvature argument only needs the presentation on a dense regular set, provided the actual geometric fields extend smoothly.
\end{remark}

\begin{remark}
On any regularly foliated neighborhood
$U\subset M^\circ\setminus\mathcal S$ on which $b\neq0$, integrability
implies the existence of a local defining function $r$ such that
\[
\nu=\frac{dr}{|dr|_g}.
\]
Writing $\lambda=|dr|_g$, recall from
Lemma~\ref{lem:kappa-log-lambda} that
\[
\kappa=d_\Sigma\log\lambda.
\]
Taking the divergence along the leaves of the foliation gives
\begin{align*}
\operatorname{div}_\Sigma\left(\frac{b^2}{\lambda^2}Q\right)
&=
\frac{1}{\lambda^2}\operatorname{div}_\Sigma(b^2Q)
-\frac{2b^2}{\lambda^2}Q(\kappa)\\
&=
\frac{1}{\lambda^2}
\left[
2b\langle d_\Sigma b,Q\rangle_g
+b^2\operatorname{div}_\Sigma Q
-2b^2Q(\kappa)
\right]\\
&=
\frac{b}{\lambda^2}
\left[
2\langle d_\Sigma b,Q\rangle_g
+b\bigl(\operatorname{div}_\Sigma Q-2Q(\kappa)\bigr)
\right]\\
&=0.
\end{align*}
Thus
\[
\operatorname{div}_\Sigma
\left(\frac{b^2}{\lambda^2}Q\right)=0.
\]
Therefore, on a contractible open subset of any leaf, the $1$-form
\[
\frac{b^2}{\lambda^2}Q
\]
is co-closed. Equivalently, its Hodge dual is closed. By the
Poincar\'e lemma, there exists an $(n-3)$-form $\Psi$ such that,
up to the sign determined by the Hodge-star convention,
\[
\frac{b^2}{\lambda^2}Q
=
\ast_\Sigma d_\Sigma\Psi.
\]
Hence, wherever $b\neq0$,
\[
Q
=
\frac{\lambda^2}{b^2}\ast_\Sigma d_\Sigma\Psi.
\]
The potential $\Psi$ is determined only up to the addition of a closed
$(n-3)$-form. Locally, for $n\ge4$, this includes the gauge freedom
\[
\Psi\longmapsto\Psi+d_\Sigma\chi.
\]
When $n=3$, $\Psi$ is a function and is determined up to an additive
constant. The covector $Q$ is intrinsic, while $\Psi$ gives a local
parameterization of solutions to the weighted divergence equation.
\end{remark}

\begin{remark}
We can give a formal local functional degree-of-freedom count as follows.
On a regular chart, the foliation function, $b$, and $A$ each contribute
one degree of freedom. The symmetric tangential trace-free tensor
$\hat{k}$ on an $(n-1)$-dimensional leaf contributes
\[
\frac{n(n-1)}{2}-1
\]
degrees of freedom. Finally, the tangential $1$-form $Q$ has $n-1$
components and satisfies one scalar first-order equation, so formally it
contributes $n-2$ degrees of freedom. Thus the total is
\[
1+1+1+
\left(\frac{n(n-1)}{2}-1\right)
+(n-2)
=
\frac{n(n+1)}{2},
\]
which is the number of independent components of a symmetric $2$-tensor
$k$ on an $n$-dimensional manifold.
It is important to note that this is only a formal local count, not a
global surjectivity statement. This count is what motivates the name
`maximally parameterized'.
\end{remark}

\begin{theorem}\label{thm:MP-nonnegative-scalar}
Let $(M,g,k)$ be a maximally parameterized initial data set satisfying
the dominant energy condition. Then
\[
\bar g=g+\beta\otimes\beta
\]
defines a smooth Riemannian metric on $M^\circ$ satisfying
\[
R_{\bar g}\geq0.
\]
\end{theorem}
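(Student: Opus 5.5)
The proof has two parts: first show that $\bar g$ is a smooth Riemannian metric on all of $M^\circ$, then get $R_{\bar g}\geq0$ on the dense regular set $U=M^\circ\setminus\mathcal S$ and extend it to $\mathcal S$ by continuity.

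\emph{Smoothness and positivity of $\bar g$.} By Definition~\ref{def:MP-initial-data}, the $1$-form $\beta=b\nu$ extends smoothly to all of $M^\circ$. Hence $\bar g=g+\beta\otimes\beta$ is a smooth symmetric $2$-tensor on $M^\circ$. It is positive definite because, for any tangent vector $X$,
\[
\bar g(X,X)=g(X,X)+\beta(X)^2\geq g(X,X).
\]
On $U$ we have $\beta\otimes\beta=b^2\,\nu\otimes\nu$, so $\bar g$ coincides there with the comparison metric of Corollary~\ref{cor:MP-local-scalar}.

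\emph{Nonnegative scalar curvature on $U$.} On $U$ the presentation $(\nu,b,A,Q,\hat k)$ is smooth and $k=k_{MP}$. By Proposition~\ref{prop:MP-zero-divergence}, $\mathrm{I}=0$ and $\mathrm{II}=0$. The gauge-invariant identity \eqref{eq:main-identity} holds throughout the domain of the smooth pair $(\nu,b)$, including at zeros of $b$. With $\mathrm{I}=\mathrm{II}=0$ it reduces to
\[
R_{\bar g}=16\pi\Bigl(\mu-J\bigl(\tfrac bW\nu\bigr)\Bigr)+|\Delta|_g^2 .
\]
Since $|\tfrac bW\nu|_g<1$, the dominant energy condition gives
\[
\mu-J\bigl(\tfrac bW\nu\bigr)\geq \mu-|J|_g\geq0 .
\]
Therefore $R_{\bar g}\geq0$ on $U$; this is exactly Corollary~\ref{cor:MP-local-scalar}.

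\emph{Extension across $\mathcal S$.} Because $\bar g$ is smooth on $M^\circ$, its scalar curvature $R_{\bar g}$ is a continuous function on $M^\circ$. The set $\mathcal S$ is closed with empty interior, so $U$ is dense in $M^\circ$. A continuous function that is nonnegative on a dense set is nonnegative everywhere, so $R_{\bar g}\geq0$ on all of $M^\circ$.

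The main point needing care is that no identity is ever applied on $\mathcal S$, where $\nu$, $b$ and the other presentation data may blow up or fail to be defined. Only the geometric quantity $R_{\bar g}$, which depends on the smooth extension of $\beta$ alone, is extended. This is why smoothness of $\beta$ across $\mathcal S$, rather than smoothness of $(\nu,b)$, is the hypothesis that matters. A minor check is that points of $U$ with $b=0$ need no separate treatment, since Theorem~\ref{thm:gauge-invariant-new} already covers them.
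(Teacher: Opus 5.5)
Your proposal is correct and follows the paper's proof essentially step for step. You obtain $R_{\bar g}\geq0$ on the dense regular set $U=M^\circ\setminus\mathcal S$ via Proposition~\ref{prop:MP-zero-divergence} and Corollary~\ref{cor:MP-local-scalar}, then extend across $\mathcal S$ using smoothness of $\beta$ and continuity of $R_{\bar g}$; your explicit check that $\bar g\geq g$ is positive definite, and your spelled-out dominant-energy step, are details the paper leaves implicit.
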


\begin{proof}
Let $\mathcal S\subset M^\circ$ be the closed singular set appearing in the
definition of a maximally parameterized initial data set, and set
\[
U=M^\circ\setminus \mathcal S.
\]
On $U$, the maximally parameterized presentation satisfies the
gauge-invariant Jang zero-divergence equations. Hence, by
Corollary~\ref{cor:MP-local-scalar},
\[
R_{\bar g}\geq0
\qquad\text{on }U.
\]
By definition, the $1$-form
\[
\beta=b\nu
\]
extends smoothly across $\mathcal S$. Therefore
\[
\bar g=g+\beta\otimes\beta
\]
extends smoothly across $\mathcal S$ as a Riemannian metric on $M^\circ$.
Since $\mathcal S$ has empty interior, $U$ is dense in $M^\circ$. The scalar
curvature of a smooth metric is continuous, and therefore
\[
R_{\bar g}\geq0
\]
on all of $M^\circ$.
\end{proof}

\section{ADM Energy and Horizon Completion}

Theorem~\ref{thm:MP-nonnegative-scalar} supplies the primary geometric
input for reducing the Penrose inequality for maximally parameterized
initial data to the Riemannian Penrose inequality.  It remains to
understand the behavior of the comparison metric $\bar g$ at spatial
infinity and at the \rev{marginally outer trapped boundary}.

\begin{remark}
    \rev{A sufficient asymptotic hypothesis is
\[
g-\delta=O_2(r^{-p}),\qquad \beta=O_2(r^{-q}),\qquad
p,q>\frac{n-2}{2},\qquad R_g\in L^1.
\]
Then $\bar g-g=O_2(r^{-2q})$, and the difference of the ADM fluxes is
$O(r^{n-2-2q})\to0$. Moreover,
$R_{\bar g}-R_g=O(r^{-2q-2})$, which is integrable at infinity;
the smooth completion ensures integrability on the remaining compact
region. Thus $\bar g$ has the required asymptotic regularity and
integrable scalar curvature. In particular, if $\beta$ has compact
support, or satisfies this decay, then
\begin{equation}\label{eq:sameenergy}
   E_{\rm ADM}(\bar g)=E_{\rm ADM}(g).
\end{equation}}
\end{remark}

In order to complete the proof of the Penrose inequality, we must
ensure that the deformed metric $\bar g$ extends to the boundary of the
initial data set and that the resulting boundary is minimal with
respect to $\bar g$.

Let $\Sigma_r$ be a regular leaf on which $H_{\Sigma_r}\neq0$. The
trace equation $\rev{\mathrm{I}}=0$ gives
\begin{equation}\label{eq:qrelation}
\frac{\operatorname{tr}_{\Sigma_r}k}{H_{\Sigma_r}}
=
\frac{b}{W}.
\end{equation} Recall that the null expansions are
\[
\theta_+
=
H_{\Sigma_r}+\operatorname{tr}_{\Sigma_r}k,
\qquad
\theta_-
=
H_{\Sigma_r}-\operatorname{tr}_{\Sigma_r}k.
\]
Using \eqref{eq:qrelation}, define
\begin{equation}\label{eq:uidentity}
u
\coloneq
W^{-2}
=
1-
\left(
\frac{\operatorname{tr}_{\Sigma_r}k}{H_{\Sigma_r}}
\right)^2
=
\frac{\theta_+\theta_-}{H_{\Sigma_r}^2}.
\end{equation} Suppose $\Sigma_r$ approaches a future MOTS $\Sigma_0$ with
$H_{\Sigma_0}\neq0$. Then
\[
\operatorname{tr}_{\Sigma_0}k=-H_{\Sigma_0},
\]
and hence \eqref{eq:qrelation} gives
\[
\frac{b}{\sqrt{1+b^2}}\longrightarrow -1.
\]
Since
\[
b\longmapsto \frac{b}{\sqrt{1+b^2}}
\]
takes values in $(-1,1)$ and approaches $-1$ only as $b\to-\infty$,
we obtain
\[
b\to-\infty,
\qquad
u=W^{-2}\to0.
\]
Similarly, if $\Sigma_r$ approaches a MITS $\Sigma_0$ with
$H_{\Sigma_0}\neq0$, then
\[
b\to+\infty,
\qquad
u\to0.
\]

The induced metric on each leaf $\Sigma_r$ is unchanged by the
deformation, since $\bar g|_{T\Sigma_r}=g|_{T\Sigma_r}=\gamma_r$.
Moreover, since
\[
\bar g(\nu,\nu)=1+b^2=W^2,
\]
the $\bar g$-unit normal to $\Sigma_r$ is
\[
\bar\nu=\frac{1}{W}\nu.
\]
For tangent vectors $X,Y\in T\Sigma_r$, the corresponding second
fundamental forms satisfy
\[
\bar P(X,Y)=\frac{1}{W}P(X,Y).
\]
Indeed, using the Lie-derivative characterization of the second
fundamental form,
\[
\bar P(X,Y)
=
\frac12(\mathcal L_{\bar\nu}\bar g)(X,Y)
=
\frac{1}{2W}(\mathcal L_{\nu}g)(X,Y)
=
\frac1W P(X,Y).
\]
Taking the trace with respect to the common induced metric $\gamma_r$
therefore gives
\begin{equation}\label{eq:Hbar}
\bar H_{\Sigma_r}
=
\frac{H_{\Sigma_r}}{W}
=
H_{\Sigma_r}\sqrt{u}.
\end{equation} Thus, provided $H_{\Sigma_r}$ remains bounded as $r\to0$, we have that
$u\to0$ implies
\[
\bar H_{\Sigma_r}\longrightarrow0.
\]
This suggests that the limiting horizon should become a minimal
hypersurface for the deformed metric. Equation \eqref{eq:Hbar}, however, does not determine whether the limiting leaf occurs at finite or infinite $\bar g$-distance.

Choose a smooth local defining function $r\ge0$ for the boundary on a foliated collar and write
\begin{equation}\label{eq:collar-g}
   g=\lambda(r,x)^{-2}dr^2+\gamma_r(x),
   \qquad
   \nu=\frac{dr}{\lambda(r,x)}.
\end{equation}
Since $1+b^2=W^2=u^{-1}$,
\begin{equation}\label{eq:gbar-collar}
   \bar g=\frac{dr^2}{\lambda(r,x)^2u(r,x)}+\gamma_r(x).
\end{equation}
The order of vanishing of $u$ is therefore the geometric datum that controls the inner end.

\begin{proposition}[Simple-zero completion]\label{prop:simplezero}
Assume that \eqref{eq:collar-g} holds on $[0,\varepsilon)\times\Sigma_0$, with $\lambda>0$ and $\gamma_r$ smooth up to $r=0$.  Suppose $u$ extends smoothly to the closed collar and
\begin{equation}\label{eq:simplezero}
   u(0,x)=0,
   \qquad
   \partial_r u(0,x)>0
\end{equation}
for every $x\in\Sigma_0$.  Then $(M^\circ,\bar g)$ admits a \rev{smooth finite-distance completion up to $\Sigma_0$}.  The completed boundary is totally geodesic, and its induced metric is $\gamma_0=g|_{T\Sigma_0}$.
\end{proposition}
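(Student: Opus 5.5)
Since $u$ vanishes to exactly first order at $r=0$, the plan is to write $u(r,x)=r\,w(r,x)$ with $w$ smooth and positive on the closed collar. Smoothness of $w$ follows from Taylor's formula with integral remainder, $w(r,x)=\int_0^1\partial_r u(sr,x)\,ds$, and positivity follows from \eqref{eq:simplezero} after shrinking $\varepsilon$. Then \eqref{eq:gbar-collar} becomes
\[
\bar g=\frac{dr^2}{r\,\lambda^2 w}+\gamma_r .
\]
This suggests the substitution $r=s^2$, which gives $dr^2/r=4\,ds^2$. In the coordinates $(s,x)$ on $(0,\sqrt\varepsilon)\times\Sigma_0$ we therefore get
\[
\bar g=\frac{4\,ds^2}{\lambda(s^2,x)^2\,w(s^2,x)}+\gamma_{s^2}(x).
\]
The coefficient $\Phi(s,x):=4/(\lambda^2w)(s^2,x)$ is smooth and positive on $[0,\sqrt\varepsilon)\times\Sigma_0$, and it is even in $s$. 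The family $\gamma_{s^2}$ is likewise smooth in $s$ up to and across $s=0$. So $\bar g$ extends smoothly to the closed collar $[0,\sqrt\varepsilon)\times\Sigma_0$. It even extends by even reflection to $(-\sqrt\varepsilon,\sqrt\varepsilon)\times\Sigma_0$, and this reflected extension supplies the Riemannian extension across $\Sigma_0$ needed later in Theorem~\ref{thm:RPI}.

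Finite distance comes next. Along a curve $x=\mathrm{const}$, the $\bar g$-length from $s=0$ to $s=s_1$ is $\int_0^{s_1}\sqrt{\Phi}\,ds<\infty$; in the original variable this is $\int_0^{r_1}\frac{dr}{\lambda\sqrt{u}}$, which converges because $u\sim r\,\partial_ru(0,x)$. Conversely, the leafwise part of the metric is uniformly equivalent to $\gamma_0$ and $\Phi$ is bounded above and below. Hence $\bar g$ is uniformly equivalent on the collar to the product metric $ds^2+\gamma_0$. It follows that the metric completion of $(M^\circ,\bar g)$ near the inner end is exactly the closed collar, with $\Sigma_0=\{s=0\}$ added at finite distance. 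The completed metric restricted to $T\Sigma_0$ is $\gamma_0=g|_{T\Sigma_0}$, because the deformation $b^2\nu\otimes\nu$ does not affect leaf directions.

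For total geodesicity, I will compute the second fundamental form of $\{s=0\}$ for the metric $\Phi\,ds^2+\gamma_{s^2}$. The unit normal is $\Phi^{-1/2}\partial_s$, and as in the derivation of \eqref{eq:Hbar},
\[
\bar P=\tfrac12\Phi^{-1/2}\,\partial_s\bigl(\gamma_{s^2}\bigr)=\Phi^{-1/2}\,s\,(\partial_r\gamma)_{s^2}.
\]
This vanishes at $s=0$, so $\Sigma_0$ is totally geodesic and in particular minimal. This is consistent with \eqref{eq:Hbar}, since $\bar H=H\sqrt u\to0$ whenever $H_{\Sigma_0}$ is bounded. The evenness in $s$ gives the same conclusion more quickly: the reflection $s\mapsto -s$ is an isometry of the extension with $\Sigma_0$ as its fixed set.

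The step needing the most care is the first one. One has to confirm that the coordinate change $r=s^2$ really yields a smooth structure compatible with $M^\circ$ on the open part, and that the smoothness of $\lambda$, $\gamma_r$, and $u$ up to $r=0$ is all that is needed. This works because $s=\sqrt r$ is a diffeomorphism for $r>0$ and every coefficient is a smooth function of $s^2$. I would also remark that the $\bar g$-completion generally has a different smooth structure at $\Sigma_0$ than $g$ does, since $s$ and $r$ are not smoothly related at $0$. This is why the proposition claims only a completion of $(M^\circ,\bar g)$, with the boundary identified with $\Sigma_0$ as a manifold.
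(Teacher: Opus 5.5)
Your proposal is correct and follows essentially the same route as the paper. Both factor $u=r\,a(r,x)$ by the Hadamard lemma, substitute $s=\sqrt r$ to get a smooth, positive, even-in-$s$ coefficient of $ds^2$, read off finite distance, and obtain total geodesicity and induced metric $\gamma_0$ from $\partial_s\gamma_{s^2}|_{s=0}=0$; your extra remarks (uniform equivalence to $ds^2+\gamma_0$ and the reflection-isometry argument) are harmless refinements of that argument.
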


\begin{proof}
By the Hadamard lemma, the simple-zero hypothesis gives
\[
    u(r,x)=r\,a(r,x),
\]
where $a$ is smooth and $a(0,x)>0$.  Define the variable $s=\sqrt r$ so that \eqref{eq:gbar-collar} becomes
\begin{equation}\label{eq:gbar-s}
   \bar g=\frac{4}{\lambda(s^2,x)^2a(s^2,x)}\,ds^2+\gamma_{s^2}(x).
\end{equation}After shrinking the collar if necessary, we may assume $a>0$.
We regard $s$ as the boundary defining coordinate for the completed
manifold; in particular, the smooth structure at the completed inner
end is the one determined by $(s,x)$ rather than by $(r,x)$.

In these coordinates the coefficient of $ds^2$ is smooth and strictly
positive at $s=0$, and hence the boundary lies at finite $\bar g$-distance.
Indeed, along a curve with $x$ fixed the distance to $s=0$ is bounded by
a constant multiple of
\[
\int_0^{s_0} ds<\infty.
\]
\rev{Every coefficient in \eqref{eq:gbar-s} is smooth up to $s=0$,
so this defines a smooth completion with boundary. In fact, the
coefficients are even in $s$, giving a smooth reflected extension
through $s=0$ as an additional property.} Because
\[
   \partial_s\gamma_{s^2}\big|_{s=0}=0,
\]
the second fundamental form of the hypersurface $\{s=0\}$ vanishes.  Thus the completed boundary is totally geodesic, in particular minimal.  Its tangential metric is $\gamma_0$, so its area agrees with its original $g$-area.
\end{proof}

\rev{Only this completed exterior is used in the Riemannian Penrose
argument; reflection is optional.}

\begin{remark}[\rev{Relation with MOTS stability}]\label{rem:stability}
If the leaves vary with normal speed $\rev{\psi}$ and $L$ denotes the MOTS stability operator with the convention $\frac d{dt}\theta_+|_{t=0}=L\psi$, then at a future MOTS with $H>0$ equation \eqref{eq:uidentity} gives
\begin{equation}\label{eq:u-stability}
   \left.\frac d{dt}u\right|_{t=0}=\frac{2}{H}\,L\psi.
\end{equation}
Thus the simple-zero condition means precisely that the chosen foliation moves strictly into the untrapped region to first order.  In particular, a strictly stable MOTS has a positive principal eigenfunction furnishing such a variation; see Andersson--Mars--Simon for the stability-operator framework \cite{AnderssonMarsSimon2008}.  This observation does not by itself show that an arbitrary preassigned maximally parameterized foliation is the principal-eigenfunction foliation.
\end{remark}

\rev{Let $\Sigma_0=\partial M$ denote the designated compact MOTS
boundary.} Define the minimum enclosing area by
\begin{equation}\label{eq:Amin-g}
\mathcal A_g(\Sigma_0)
:=
\inf\bigl\{
|S|_g \;\bigm|\;
\rev{S\subset M^\circ
\text{ is a smooth compact hypersurface enclosing }\Sigma_0}
\bigr\}.
\end{equation}
Define $\mathcal A_{\bar g}(\Sigma_0)$ analogously \rev{after either smooth
minimal completion allowed in Theorem~\ref{thm:main}}. Since
\[
\bar g=g+b^2\nu\otimes\nu\geq g
\]
as quadratic forms in the interior, and the completed boundary has the
same induced metric as $\Sigma_0$ in $g$, we have
\begin{equation}\label{eq:Acompare}
|S|_{\bar g}\geq |S|_g
\end{equation}
for every admissible enclosure $S$. Therefore
\begin{equation}\label{eq:Acompare-inf}
\mathcal A_{\bar g}(\Sigma_0)
\geq
\mathcal A_g(\Sigma_0).
\end{equation}
To apply the Riemannian Penrose inequality, we use the standard
minimizing-enclosure reduction.

\begin{revision}
\begin{proposition}[Enclosing-area reduction in all dimensions]\label{prop:minhull}
Let $(M^n,g)$, $n\geq3$, be asymptotically flat, complete up to its
compact smooth minimal boundary $\Sigma_0$, and satisfy $R_g\geq0$. Then
\[
E_{\rm ADM}\geq\frac12
\left(\frac{\mathcal A_g(\Sigma_0)}{\omega_{n-1}}\right)^{\frac{n-2}{n-1}}.
\]
\end{proposition}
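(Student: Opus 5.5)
The plan is to reduce Proposition~\ref{prop:minhull} to Theorem~\ref{thm:RPI} by replacing $\Sigma_0$ with its outermost minimizing enclosure. First, I will set up a minimization problem for perimeter. Consider Caccioppoli sets $E$ containing a fixed collar of $\Sigma_0$ and contained in a large coordinate ball. The reduced boundaries $\partial^*E$ are competitors enclosing $\Sigma_0$. Because the end is asymptotically flat, large coordinate spheres $S_r$ are strictly mean convex and act as barriers, so minimizers stay in a compact region. By BV compactness and lower semicontinuity of perimeter, a minimizer exists. Among all minimizers I will take the maximal one, the union of minimizers, which is again a minimizer by the standard lattice property $P(E\cup F)+P(E\cap F)\le P(E)+P(F)$. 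Its boundary $\Sigma$ is the outermost $g$-minimizing enclosure of $\Sigma_0$.

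Second, I will establish that $\mathcal H^{n-1}_g(\Sigma)=\mathcal A_g(\Sigma_0)$. The inequality $\geq$ follows by approximating a Caccioppoli boundary by smooth enclosing hypersurfaces whose areas converge; the standard smoothing of BV sets gives this. The inequality $\le$ holds because every smooth enclosure is a competitor.

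For regularity, away from $\Sigma_0$ the surface $\Sigma$ is area-minimizing, hence smooth off a closed singular set of Hausdorff dimension at most $n-8$, with minimal regular part. Where $\Sigma$ touches $\Sigma_0$, it is an obstacle problem with a minimal obstacle. The strong maximum principle, applied to the regular parts, shows that touching components coincide with components of $\Sigma_0$. In all cases $\Sigma$ is almost-minimizing, which is exactly the class allowed in Theorem~\ref{thm:RPI}.

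Third, I will apply Theorem~\ref{thm:RPI} to the region $M_\Sigma$ exterior to $\Sigma$. This region is asymptotically flat with $R_g\ge0$ and has the same ADM energy, since it agrees with $M$ near infinity. The boundary $\Sigma$ is its own outermost minimizing enclosure by maximality. The hypothesis of a $C^{2,\alpha}$ extension across $\Sigma$ holds because $g$ is a smooth metric on all of $M$, and across $\Sigma_0$ one can extend $g$ smoothly past the boundary. Theorem~\ref{thm:RPI} then gives
\[
E_{\rm ADM}\ge\tfrac12\bigl(\mathcal H^{n-1}_g(\Sigma)/\omega_{n-1}\bigr)^{\frac{n-2}{n-1}},
\]
and combining this with the area equality proves the claim.

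The main obstacle is the regularity and touching analysis of $\Sigma$ in high dimensions. This is needed to confirm that the outermost minimizing enclosure meets the precise hypotheses of Bi--Zhu, namely almost-minimizing with minimal regular part and an extension across the boundary. It is also needed to justify the identification of $\mathcal H^{n-1}_g(\Sigma)$ with the infimum over smooth enclosures. My first approach is to cite the standard one-sided obstacle regularity and the maximum principle for minimal surfaces, in the style of Huisken--Ilmanen's minimizing hull construction, adapted verbatim to dimension $n$ with a singular set of codimension at least $7$.
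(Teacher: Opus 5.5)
Your proposal is correct and follows the paper's proof essentially step for step. The shared steps are: the outermost perimeter-minimizing enclosure, obtained with an auxiliary collar behind $\Sigma_0$ and a mean-convex coordinate-sphere barrier; regularity away from $\Sigma_0$ (singular set of dimension at most $n-8$), together with obstacle regularity and the strong maximum principle at contact; smooth approximation to compare $\mathcal H^{n-1}_g(\Sigma)$ with $\mathcal A_g(\Sigma_0)$; and Theorem~\ref{thm:RPI} applied to the exterior. The only differences are that you prove the full equality $\mathcal H^{n-1}_g(\Sigma)=\mathcal A_g(\Sigma_0)$ where only $\mathcal A_g(\Sigma_0)\le\mathcal H^{n-1}_g(\Sigma)$ is needed, and that in the smoothing step you should, as the paper does, push any portion of $\Sigma$ lying on $\Sigma_0$ slightly outward so the approximating enclosures lie in $M^\circ$, as the definition of $\mathcal A_g$ requires.
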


\begin{proof}
Take the outermost perimeter-minimizing enclosure $S_*$ of $\Sigma_0$;
see \cite[Definition~2.2 and Remark~2.3]{BiZhu2026}.
One may attach an auxiliary smooth collar behind $\Sigma_0$ to formulate the
containment constraint and use a large mean-convex coordinate sphere
as an outer barrier. Compactness and lower semicontinuity of perimeter
give a minimizer; taking the outermost minimizer makes it outer minimizing.
The auxiliary extension is only used for this variational formulation;
no curvature hypothesis on it enters the argument.

Away from $\Sigma_0$, the minimizing frontier has a smooth minimal regular
part and a singular set of Hausdorff dimension at most $n-8$ \cite[Theorem~2.18]{FogagnoloMazzieri2022}.
At contact with the smooth minimal obstacle $\Sigma_0$, obstacle regularity
and the strong maximum principle give smooth contact; a component
may coincide with a component of $\Sigma_0$. Thus $S_*$ has precisely the
boundary regularity allowed in Theorem~\ref{thm:RPI}.

Smooth exterior approximation of finite-perimeter enclosures, with
any contact portions pushed a small distance outward, gives smooth
compact enclosures $S_j$ of $\Sigma_0$ such that
\[
 |S_j|_g\longrightarrow\mathcal H_g^{n-1}(S_*).
\]
Consequently $\mathcal A_g(\Sigma_0)\leq\mathcal H_g^{n-1}(S_*)$.
This approximation step is needed because the definition of
$\mathcal A_g$ uses smooth competitors even when $S_*$ is singular.
The exterior of $S_*$ containing the asymptotically flat
end has unchanged ADM energy and nonnegative scalar curvature. Applying
Theorem~\ref{thm:RPI} to that exterior yields
\[
E_{\rm ADM}\geq\frac12
\left(\frac{\mathcal H_g^{n-1}(S_*)}{\omega_{n-1}}\right)^{\frac{n-2}{n-1}}
\geq\frac12
\left(\frac{\mathcal A_g(\Sigma_0)}{\omega_{n-1}}\right)^{\frac{n-2}{n-1}}.\qedhere
\]
\end{proof}
\end{revision}

\section{Proof of Penrose Inequality for Maximally Parameterized Data}

We now have all of the pieces required to prove the main theorem: a Penrose inequality for maximally parameterized initial data. 

\begin{revision}
\begin{theorem}[Nondegenerate Penrose inequality for maximally parameterized data]
\label{thm:main}
\leavevmode\par\nobreak\noindent
Let $(M^n,g,k)$, \rev{$n\geq3$}, be an asymptotically flat
maximally parameterized initial data set satisfying the dominant energy
condition, with compact MOTS boundary $\Sigma_0=\partial M$
(or a MITS boundary after time reversal). Assume the following:
\begin{enumerate}
    \item on a foliated collar of each boundary component,
    \[
    u=(1+b^2)^{-1}
    \]
    extends smoothly to the boundary and has a simple transverse zero
    in the sense of Proposition~\ref{prop:simplezero};
    \item the comparison metric $\bar g$ is asymptotically flat and
    \begin{equation}\label{eq:masspreservehyp}
    E_{\rm ADM}(\bar g)=E_{\rm ADM}(g).
    \end{equation}
\end{enumerate}
Alternatively, item~1 may be replaced by the assumption that $\bar g$
admits a smooth completion up to $\Sigma_0$ with minimal boundary
and induced metric $g|_{T\Sigma_0}$; item~2 is retained in both
cases. Then
\begin{equation}\label{eq:energyPI}
E_{\rm ADM}(g)
\geq
\frac12
\left(
\frac{\mathcal A_g(\Sigma_0)}{\omega_{n-1}}
\right)^{\frac{n-2}{n-1}}.
\end{equation}
\end{theorem}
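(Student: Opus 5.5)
The plan is to assemble the results already established into a reduction to the Riemannian statement of Proposition~\ref{prop:minhull}, applied to the completed comparison manifold $(\bar M,\bar g)$.

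\textbf{Step 1: interior scalar curvature.} Theorem~\ref{thm:MP-nonnegative-scalar} shows that $\bar g=g+\beta\otimes\beta$ is a smooth Riemannian metric on $M^\circ$ with $R_{\bar g}\geq0$. It uses only the maximally parameterized structure and the dominant energy condition.

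\textbf{Step 2: horizon completion.} Under item~1, apply Proposition~\ref{prop:simplezero} on the foliated collar of each boundary component. Each completion takes place in the new coordinate $s=\sqrt r$, and the collars are disjoint, so the completions patch together. The result is a manifold $\bar M$ with compact boundary $\bar\Sigma_0$ that is totally geodesic, hence minimal, with induced metric $\gamma_0=g|_{T\Sigma_0}$. Under the alternative hypothesis, this completion is simply assumed.

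One point needs care: $R_{\bar g}\geq0$ must hold up to the boundary. Step~1 gives it on the interior. Since $\bar g$ is smooth up to $s=0$ in the completed smooth structure, $R_{\bar g}$ is continuous there, and the inequality passes to the boundary. Completeness of $(\bar M,\bar g)$ up to $\bar\Sigma_0$ follows because the end at infinity is unchanged and the inner end is now a compact boundary at finite distance.

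\textbf{Step 3: asymptotics and energy.} Item~2 states that $\bar g$ is asymptotically flat, with integrable scalar curvature by the stated convention, and that $E_{\rm ADM}(\bar g)=E_{\rm ADM}(g)$. Proposition~\ref{prop:minhull} applied to $(\bar M,\bar g)$ then gives
\[
E_{\rm ADM}(g)=E_{\rm ADM}(\bar g)\geq\frac12\left(\frac{\mathcal A_{\bar g}(\bar\Sigma_0)}{\omega_{n-1}}\right)^{\frac{n-2}{n-1}}.
\]

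\textbf{Step 4: comparing enclosing areas.} It remains to show $\mathcal A_{\bar g}\geq\mathcal A_g$, which is \eqref{eq:Acompare-inf}. Since $\bar g\geq g$ as quadratic forms, every smooth enclosure $S\subset M^\circ$ satisfies $|S|_{\bar g}\geq|S|_g$. The competitors for $\mathcal A_{\bar g}$ in the completed manifold lie in the interior, which is diffeomorphic to $M^\circ$. The only difference is that the smooth structure near the boundary changed from $r$ to $s$. A hypersurface smooth in $(s,x)$ and lying in the interior is still smooth in $(r,x)$ there, because $r=s^2$ is a diffeomorphism for $s>0$. So it is an admissible $g$-competitor, and the enclosing relation is preserved. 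Taking infima and using the monotonicity of $t\mapsto t^{(n-2)/(n-1)}$ gives \eqref{eq:energyPI}.

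I expect the main obstacle to be the subtle point in Step~2: checking that the completed manifold meets the hypotheses of Proposition~\ref{prop:minhull} exactly. Those hypotheses are a smooth compact minimal boundary with $R\geq0$ up to it. Here the change of smooth structure at the inner end, and the continuity of scalar curvature across the singular set $\mathcal S$ near the collar, must be handled carefully. I would verify both by working entirely in the $(s,x)$ chart.
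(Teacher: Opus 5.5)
Your proposal is correct and follows the paper's proof essentially step for step. Like the paper, you get $R_{\bar g}\geq0$ from Theorem~\ref{thm:MP-nonnegative-scalar}, complete the boundary via Proposition~\ref{prop:simplezero} (or take the completion as assumed) with $R_{\bar g}\geq0$ extended by continuity, apply Proposition~\ref{prop:minhull} to the completed $\bar g$, and conclude using $\mathcal A_{\bar g}\geq\mathcal A_g$ (from $\bar g\geq g$) together with $E_{\rm ADM}(\bar g)=E_{\rm ADM}(g)$; your remarks on the change of smooth structure $r=s^2$ and on the admissibility of interior competitors make explicit points the paper leaves implicit.
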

\end{revision}

\begin{proof}
By Theorem~\ref{thm:MP-nonnegative-scalar},
\[
R_{\bar g}\geq0
\]
on $M^\circ$. \rev{Under item~1, Proposition~\ref{prop:simplezero}
gives a smooth totally geodesic completion with induced metric
$g|_{T\Sigma_0}$. Under the alternative boundary hypothesis, the smooth
minimal boundary and equality of tangential metrics are assumed
directly. In both cases the scalar curvature remains nonnegative up
to the boundary by continuity.}

Applying Proposition~\ref{prop:minhull} to the completed metric
$\bar g$ gives
\[
E_{\rm ADM}(\bar g)
\geq
\frac12
\left(
\frac{\mathcal A_{\bar g}(\Sigma_0)}{\omega_{n-1}}
\right)^{\frac{n-2}{n-1}}.
\]
By the area comparison above,
\[
\mathcal A_{\bar g}(\Sigma_0)
\geq
\mathcal A_g(\Sigma_0),
\]
and by the asymptotic hypothesis,
\[
E_{\rm ADM}(\bar g)=E_{\rm ADM}(g).
\]
\rev{The desired inequality \eqref{eq:energyPI} now follows.}
\end{proof}

\section{Rigidity and Further Questions}\label{sec:open}

Our construction of maximally parameterized presentations and \rev{initial} data isolates several problems that should not be conflated with the scalar curvature calculation.

\subsection{Degenerate horizons and cylindrical ends}\label{sec:degenerate}

\rev{The simple-zero branch of Theorem~\ref{thm:main} treats the nondegenerate
regime; its regular minimal boundary alternative does not address
degenerate horizon blow-up.}  The remaining horizon problem is to classify the metric \eqref{eq:gbar-collar} when $du=0$ at the MOTS.  A quadratic zero is the natural cylindrical model, but a proof that an arbitrary degenerate maximally parameterized horizon has the quantitative asymptotics required by a cylindrical-end Penrose theorem is still missing.  In dimension three, the theorem of Jaracz \cite{Jaracz2020} provides an appropriate target once those asymptotics and the generalized minimizing condition are verified.  In higher dimensions an analogous cylindrical-end statement would need to be supplied or derived from an all-dimensional Riemannian Penrose theorem by a truncation/doubling argument.

Equation \eqref{eq:u-stability} suggests that this dichotomy may be related to MOTS stability.  If the actual foliation lapse $\psi$ satisfies $L\psi>0$, then the horizon is in the simple-zero regime.  The marginal case $L\psi=0$ is the natural place to expect cylindrical behavior.  Turning this observation into a complete boundary classification is a concrete next problem.

\subsection{Global surjectivity of the parametrization}\label{sec:surjectivity}

The count of local functional degrees of freedom does not show that a prescribed global $k$ can always be represented in the form \eqref{eq:kMP}.  The open questions include the existence of a suitable integrable one-form $\nu$, the behavior of its singular set, and the solvability of the weighted equation for $Q$ across leaves of changing topology.  Thus `maximally parameterized' should be read as a local functional statement, not as a theorem that the construction parametrizes all initial data.

\subsection{Nonzero ADM linear momentum}

\rev{The Riemannian comparison directly controls $E_{\rm ADM}$.
An ADM mass conclusion for nonzero ADM linear momentum requires an admissible
rest-frame boost and any necessary approximation that retain the
comparison hypotheses and control minimum enclosing area. Establishing
these properties within the maximally parameterized class remains an
additional problem.}

\subsection{Equality and rigidity}

Suppose equality holds in Theorem~\ref{thm:main}. Then equality must
hold at each step of the comparison argument. In particular
\[
\mathcal A_{\bar g}(\Sigma_0)
=
\mathcal A_g(\Sigma_0),
\]
and equality holds in the Riemannian Penrose inequality for the exterior
comparison geometry. \rev{The rigidity clause of
\cite[Theorem~1.2]{BiZhu2026} identifies this exterior with a Riemannian
Schwarzschild exterior in every dimension.}
\rev{In particular}, on this exterior region we therefore have
\[
R_{\bar g}=0.
\]
The \rev{gauge-invariant Schoen--Yau identity} gives
\[
0
=
16\pi\bigl(\mu-J(v)\bigr)
+
|\Delta|_g^2.
\]
Since both terms on the right-hand side are nonnegative under the
dominant energy condition, it follows that
\begin{equation}\label{eq:equalitydelta}
\Delta=0,
\qquad
\mu-J(v)=0,
\end{equation}
\rev{on the regular presentation set in the exterior comparison region}.
\rev{At every such point, where $b$ is finite,}
\[
|v|_g<1.
\]
Together with the dominant energy condition,
\eqref{eq:equalitydelta} therefore implies
\[
\mu=0,
\qquad
J=0.
\]
\rev{The original constraint densities are smooth, so their vanishing
extends across the singular presentation set by continuity. Thus
equality forces vacuum data throughout the comparison exterior and
vanishing discrepancy wherever its intrinsic presentation is defined.}

What remains is a genuine spacetime rigidity problem: one must
understand solutions of
\[
\Delta=0
\]
over the Schwarzschild comparison geometry, and determine when the
original initial data $(M,g,k)$ arise as a spacelike slice of the
Schwarzschild spacetime. We do not claim this final reconstruction
here.

\appendix

\section{\rev{Derivation of the New Schoen--Yau Identity and Its Gauge-Invariant Form}}
\label{app:one-parameter-identity}

In this appendix, we give the calculations underlying
Theorem~\ref{thm:GSY-t-extended} and the \rev{gauge-invariant Schoen--Yau identity}
\eqref{eq:main-identity}.
Let $f,\phi\colon M\to\mathbb R$ be smooth with $\phi>0$, and set
\[
\bar g=g+\phi^2\,df\otimes df,
\qquad
W=\sqrt{1+\phi^2|df|_g^2},
\qquad
v=\frac{\phi\nabla f}{W}.
\]
On the set where $df\neq0$, define
\[
\nu=\frac{df}{|df|_g},
\]
so that
\[
v=\frac{\phi|df|_g}{W}\,\nu.
\]
We write
\[
k_{\nu\nu}=k(\nu,\nu),
\qquad
h_{\nu\nu}=h(\nu,\nu),
\]
and define
\[
\sigma=k_{\nu\nu}-h_{\nu\nu}.
\]
Recall that
\[
\delta(t)
=
h-k+t\sigma\,\nu\otimes\nu.
\]
It will also be convenient to define
\begin{equation}\label{eq:H-t-def}
H(t)
\coloneq
h+t\sigma\,\nu\otimes\nu,
\end{equation}
so that
\[
\delta(t)=H(t)-k.
\]
Since
\[
\bar g^{ij}
=
g^{ij}-v^iv^j,
\]
we have
\begin{equation}\label{eq:bar-nu}
\bar g^{ij}\nu_j
=
\frac{\nu^i}{W^2},
\qquad
|\nu|_{\bar g}^2
=
\frac{1}{W^2}.
\end{equation}
Moreover,
\begin{equation}\label{eq:delta-nunu}
\delta(t)_{\nu\nu}
=
h_{\nu\nu}-k_{\nu\nu}+t\sigma
=
(t-1)\sigma.
\end{equation}

\subsection{Variation identities}

\medskip

\noindent\textbf{Identity 1.}
\begin{equation}\label{eq:identity-1}
W^4\frac{d}{dt}|\delta(t)|_{\bar g}^2
=
2\sigma^2(t-1)
\end{equation}

\begin{proof}
Since $\bar g$ is independent of $t$ and
\[
\frac{d}{dt}\delta(t)
=
\sigma\,\nu\otimes\nu,
\]
we compute
\begin{align*}
W^4\frac{d}{dt}|\delta(t)|_{\bar g}^2
&=
2W^4\bar g^{im}\bar g^{jn}
\delta(t)_{ij}
\frac{d}{dt}\delta(t)_{mn}\\
&=
2\sigma W^4
\bar g^{im}\bar g^{jn}
\delta(t)_{ij}\nu_m\nu_n\\
&=
2\sigma\delta(t)_{\nu\nu}\\
&=
2\sigma^2(t-1),
\end{align*}
where we used \eqref{eq:bar-nu} in the third line and
\eqref{eq:delta-nunu} in the final line.
\end{proof}

\medskip

\noindent\textbf{Identity 2.}
\begin{equation}\label{eq:identity-2}
W^4\frac{d}{dt}
\left(
2|\delta(t)(v,\cdot)|_{\bar g}^2
\right)
=
4\phi^2|df|_g^2\sigma^2(t-1)
\end{equation}

\begin{proof}
Since $v$ and $\bar g$ are independent of $t$,
\begin{align*}
W^4\frac{d}{dt}
\left(
2|\delta(t)(v,\cdot)|_{\bar g}^2
\right)
&=
4W^4
\bar g^{ij}v^kv^\ell
\delta(t)_{ik}
\frac{d}{dt}\delta(t)_{j\ell}\\
&=
4\sigma W^4
\bar g^{ij}v^kv^\ell
\delta(t)_{ik}\nu_j\nu_\ell.
\end{align*}
Using
\[
v^i
=
\frac{\phi|df|_g}{W}\nu^i
\]
and \eqref{eq:bar-nu}, this becomes
\begin{align*}
W^4\frac{d}{dt}
\left(
2|\delta(t)(v,\cdot)|_{\bar g}^2
\right)
&=
4\phi^2|df|_g^2
\sigma\,\delta(t)_{\nu\nu}\\
&=
4\phi^2|df|_g^2
\sigma^2(t-1),
\end{align*}
as claimed.
\end{proof}

\medskip

\noindent\textbf{Identity 3.}
\begin{equation}\label{eq:identity-3}
W^4\frac{d}{dt}
\left(
\operatorname{tr}_{\bar g}H(t)
\right)^2
=
2\sigma W^2\operatorname{tr}_{\bar g}h
+
2t\sigma^2
\end{equation}

\begin{proof}
By \eqref{eq:H-t-def},
\[
\frac{d}{dt}
\operatorname{tr}_{\bar g}H(t)
=
\sigma\bar g^{ij}\nu_i\nu_j
=
\frac{\sigma}{W^2}.
\]
Moreover,
\[
\operatorname{tr}_{\bar g}H(t)
=
\operatorname{tr}_{\bar g}h
+
\frac{t\sigma}{W^2}.
\]
Therefore
\begin{align*}
W^4\frac{d}{dt}
\left(
\operatorname{tr}_{\bar g}H(t)
\right)^2
&=
2W^4
\operatorname{tr}_{\bar g}H(t)
\frac{\sigma}{W^2}\\
&=
2\sigma W^2
\left(
\operatorname{tr}_{\bar g}h
+
\frac{t\sigma}{W^2}
\right)\\
&=
2\sigma W^2\operatorname{tr}_{\bar g}h
+
2t\sigma^2.\qedhere
\end{align*}
\end{proof}

\medskip

\noindent\textbf{Identity 4.}
\begin{equation}\label{eq:identity-4}
W^4\frac{d}{dt}
\left[
2k(v,v)\operatorname{tr}_{\bar g}\delta(t)
\right]
=
2\phi^2|df|_g^2\sigma k_{\nu\nu}
\end{equation}

\begin{proof}
Since $k(v,v)$ is independent of $t$,
\begin{align*}
W^4\frac{d}{dt}
\left[
2k(v,v)\operatorname{tr}_{\bar g}\delta(t)
\right]
&=
2W^4 k(v,v)
\frac{d}{dt}
\operatorname{tr}_{\bar g}\delta(t)\\
&=
2W^4 k(v,v)
\frac{\sigma}{W^2}\\
&=
2\sigma W^2 k(v,v).
\end{align*}
Since
\[
v
=
\frac{\phi|df|_g}{W}\nu,
\]
we have
\[
k(v,v)
=
\frac{\phi^2|df|_g^2}{W^2}k_{\nu\nu}.
\]
Substitution gives \eqref{eq:identity-4}.
\end{proof}

\medskip

\noindent\textbf{Identity 5.}
\begin{equation}\label{eq:identity-5}
W^4\frac{d}{dt}
\left[
\frac{2\sigma^2t(1-t)}{W^2}
\right]
=
2\sigma^2W^2(1-2t)
\end{equation}

\begin{proof}
The quantities $W$ and $\sigma$ are independent of $t$, and hence
\[
W^4\frac{d}{dt}
\left[
\frac{2\sigma^2t(1-t)}{W^2}
\right]
=
2\sigma^2W^2(1-2t).\qedhere
\]
\end{proof}

\medskip

\noindent\textbf{Identity 6.}
\begin{equation}\label{eq:identity-6}
W^2\operatorname{div}_{\bar g}(\phi v)
=
\operatorname{div}_g(\phi v)
-
\frac{\phi\,v(\phi^2|df|_g^2)}{2W^2}
\end{equation}

\begin{proof}
Using
\[
\sqrt{\det\bar g}
=
W\sqrt{\det g}
\]
and
\[
\bar g^{ij}v_j
=
\frac{v^i}{W^2},
\]
we compute
\begin{align*}
W^2\operatorname{div}_{\bar g}(\phi v)
&=
\frac{W^2}{\sqrt{\det\bar g}}
\partial_i
\left(
\sqrt{\det\bar g}\,
\bar g^{ij}\phi v_j
\right)\\
&=
\frac{W}{\sqrt{\det g}}
\partial_i
\left(
\sqrt{\det g}\,
\frac{\phi v^i}{W}
\right)\\
&=
\operatorname{div}_g(\phi v)
+
W\phi v^i\partial_i\left(\frac1W\right)\\
&=
\operatorname{div}_g(\phi v)
-
\frac{\phi\,v(W)}{W}.
\end{align*}
Since
\[
W^2=1+\phi^2|df|_g^2,
\]
we have
\[
v(W)
=
\frac{v(\phi^2|df|_g^2)}{2W},
\]
which gives \eqref{eq:identity-6}.
\end{proof}

\medskip

\noindent\textbf{Identity 7.}
\begin{equation}\label{eq:identity-7}
\operatorname{div}_g(\phi v)
=
\phi\operatorname{tr}_g h
-
\frac{\phi\,v(\phi^2|df|_g^2)}{2W^2}
\end{equation}

\begin{proof}
Since
\[
\phi v
=
\frac{\phi^2\nabla f}{W},
\]
we have
\begin{align*}
\operatorname{div}_g(\phi v)
&=
\frac{\phi^2}{W}\Delta_g f
+
\frac{2\phi}{W}
\langle d\phi,df\rangle_g
+
\phi^2
\left\langle
df,d\left(\frac1W\right)
\right\rangle_g.
\end{align*}
By the definition of $h$,
\[
\phi\operatorname{tr}_g h
=
\frac{\phi^2}{W}\Delta_g f
+
\frac{2\phi}{W}
\langle d\phi,df\rangle_g.
\]
Therefore
\begin{align*}
\operatorname{div}_g(\phi v)
&=
\phi\operatorname{tr}_g h
+
\phi^2
\left\langle
df,d\left(\frac1W\right)
\right\rangle_g\\
&=
\phi\operatorname{tr}_g h
-
\frac{\phi^2}{2W^3}
\left\langle
df,d(\phi^2|df|_g^2)
\right\rangle_g.
\end{align*}
Finally, using
\[
v
=
\frac{\phi\nabla f}{W},
\]
the last term becomes
\[
-\frac{\phi\,v(\phi^2|df|_g^2)}{2W^2},
\]
which proves the identity.
\end{proof}

\medskip

\noindent\textbf{Identity 8.}
\begin{equation}\label{eq:identity-8}
\begin{aligned}
W^4\frac{d}{dt}
\bigg[
-\frac{2}{\phi}
\operatorname{div}_{\bar g}
\bigl(
\phi\delta(t)(v,\cdot)
\bigr)
+
2v\bigl(
\operatorname{tr}_{\bar g}\delta(t)
\bigr)
\bigg]
=
-2\sigma W^2\operatorname{tr}_g h
\end{aligned}
\end{equation}

\begin{proof}
We compute the two terms separately. Since
\[
\frac{d}{dt}\delta(t)
=
\sigma\nu\otimes\nu
\]
and
\[
\nu(v)
=
\frac{\phi|df|_g}{W},
\]
we have
\[
\frac{d}{dt}\bigl(\delta(t)(v,\cdot)\bigr)
=
\sigma v.
\]
Therefore
\begin{align*}
&W^4\frac{d}{dt}
\left[
-\frac{2}{\phi}
\operatorname{div}_{\bar g}
\bigl(
\phi\delta(t)(v,\cdot)
\bigr)
\right]\\
&\qquad
=
-\frac{2W^4}{\phi}
\operatorname{div}_{\bar g}
(\phi\sigma v)\\
&\qquad
=
-\frac{2\sigma W^4}{\phi}
\operatorname{div}_{\bar g}(\phi v)
-
2W^4
\langle d\sigma,v\rangle_{\bar g}.
\end{align*}
Since
\[
\bar g^{ij}v_j
=
\frac{v^i}{W^2},
\]
we have
\[
\langle d\sigma,v\rangle_{\bar g}
=
\frac{v(\sigma)}{W^2}.
\]
Hence
\begin{equation}\label{eq:identity-8-first}
W^4\frac{d}{dt}
\left[
-\frac{2}{\phi}
\operatorname{div}_{\bar g}
\bigl(
\phi\delta(t)(v,\cdot)
\bigr)
\right]
=
-\frac{2\sigma W^4}{\phi}
\operatorname{div}_{\bar g}(\phi v)
-
2W^2v(\sigma).
\end{equation}

For the second term,
\begin{align*}
W^4\frac{d}{dt}
\left[
2v\bigl(
\operatorname{tr}_{\bar g}\delta(t)
\bigr)
\right]
&=
2W^4v\left(\frac{\sigma}{W^2}\right)\\
&=
2W^2v(\sigma)
+
2\sigma W^4v(W^{-2})\\
&=
2W^2v(\sigma)
-
2\sigma\,v(\phi^2|df|_g^2).
\end{align*}
Adding this to \eqref{eq:identity-8-first} gives
\begin{align*}
& W^4\frac{d}{dt}
\bigg[
-\frac{2}{\phi}
\operatorname{div}_{\bar g}
\bigl(
\phi\delta(t)(v,\cdot)
\bigr)
+
2v\bigl(
\operatorname{tr}_{\bar g}\delta(t)
\bigr)
\bigg]\\
&\qquad
=
-\frac{2\sigma W^4}{\phi}
\operatorname{div}_{\bar g}(\phi v)
-
2\sigma\,v(\phi^2|df|_g^2).
\end{align*}
Using Identity~\eqref{eq:identity-6},
\begin{align*}
&=
-\frac{2\sigma W^2}{\phi}
\left[
\operatorname{div}_g(\phi v)
-
\frac{\phi\,v(\phi^2|df|_g^2)}{2W^2}
\right]
-
2\sigma\,v(\phi^2|df|_g^2)\\
&=
-\frac{2\sigma W^2}{\phi}
\operatorname{div}_g(\phi v)
-
\sigma\,v(\phi^2|df|_g^2).
\end{align*}
Finally, Identity~\eqref{eq:identity-7} gives
\begin{align*}
&=
-2\sigma W^2\operatorname{tr}_g h
+
\sigma\,v(\phi^2|df|_g^2)
-
\sigma\,v(\phi^2|df|_g^2)\\
&=
-2\sigma W^2\operatorname{tr}_g h.
\end{align*}
This proves \eqref{eq:identity-8}.
\end{proof}

\subsection{\rev{Proof of the New Schoen--Yau Identity (General $t$)}}

\begin{proof}[Proof of Theorem~\ref{thm:GSY-t-extended}]
Let $P(t)$ denote the right-hand side of
\eqref{eq:GSY-param-t}.  At $t=0$, we have
\[
\delta(0)=h-k,
\]
and \eqref{eq:GSY-param-t} is precisely the generalized
Schoen--Yau identity of Bray--Khuri.  It therefore suffices to show
that
\[
P'(t)=0.
\]

Since
\[
\delta(t)=H(t)-k,
\]
we may rewrite
\begin{align*}
&
\bigl(
\operatorname{tr}_{\bar g}\delta(t)
\bigr)^2
+
2\bigl(
\operatorname{tr}_{\bar g}k+k(v,v)
\bigr)
\operatorname{tr}_{\bar g}\delta(t)
\\
&\qquad
=
\bigl(
\operatorname{tr}_{\bar g}H(t)
\bigr)^2
-
\bigl(
\operatorname{tr}_{\bar g}k
\bigr)^2
+
2k(v,v)
\operatorname{tr}_{\bar g}\delta(t).
\end{align*}
The quantities $\mu$, $J$, $v$, $\bar g$, $k$, $\sigma$, and $W$
are all independent of $t$.  Using
Identities~\eqref{eq:identity-1}--\eqref{eq:identity-5} and
\eqref{eq:identity-8}, we obtain
\begin{align*}
W^4P'(t)
&=
2\sigma^2(t-1)
+
4\phi^2|df|_g^2\sigma^2(t-1)
-
2\sigma W^2\operatorname{tr}_g h\\
&\quad
+
2\sigma^2W^2(1-2t)
+
2\sigma W^2\operatorname{tr}_{\bar g}h
+
2t\sigma^2\\
&\quad
+
2\phi^2|df|_g^2\sigma k_{\nu\nu}.
\end{align*}
Using
\[
W^2=1+\phi^2|df|_g^2
\]
and collecting the terms involving $\sigma^2$ gives
\begin{align*}
W^4P'(t)
&=
-2\phi^2|df|_g^2\sigma^2
-
2\sigma W^2\operatorname{tr}_g h\\
&\quad
+
2\sigma W^2\operatorname{tr}_{\bar g}h
+
2\phi^2|df|_g^2\sigma k_{\nu\nu}.
\end{align*}
Since
\[
\bar g^{ij}=g^{ij}-v^iv^j,
\]
we have
\[
\operatorname{tr}_{\bar g}h
=
\operatorname{tr}_g h-h(v,v).
\]
Thus
\begin{align*}
W^4P'(t)
&=
-2\phi^2|df|_g^2\sigma^2
-
2\sigma W^2h(v,v)
+
2\phi^2|df|_g^2\sigma k_{\nu\nu}.
\end{align*}
But
\[
h(v,v)
=
\frac{\phi^2|df|_g^2}{W^2}h_{\nu\nu},
\]
and hence
\begin{align*}
W^4P'(t)
&=
-2\phi^2|df|_g^2\sigma^2
+
2\phi^2|df|_g^2
\sigma
\bigl(
k_{\nu\nu}-h_{\nu\nu}
\bigr)\\
&=
-2\phi^2|df|_g^2\sigma^2
+
2\phi^2|df|_g^2\sigma^2\\
&=
0.
\end{align*}
Therefore $P(t)$ is independent of $t$, and since
\eqref{eq:GSY-param-t} holds at $t=0$, it holds for every
$t\in\mathbb R$.
\end{proof}

\subsection{\rev{The Gauge-Invariant Schoen--Yau Identity ($t=1$)}}
\label{app:gauge-invariant-identity}

\begin{revision}
We derive \eqref{eq:main-identity} locally, first where $b>0$.
Write $\nu=dr/|dr|_g$, set $f=r$ and $\phi=b/|dr|_g>0$, and obtain
\[
\beta=b\nu=\phi\,df,\qquad
\nu=\frac{df}{|df|_g},\qquad
b=\phi|df|_g,\qquad
W=\sqrt{1+b^2},\qquad
v=\frac{b}{W}\nu.
\]
The region where $b<0$ is handled by simultaneously reversing $(\nu,b)$: this leaves $\beta$, $v$, $\Delta$, $\rev{\mathrm{I}}$, and $\rev{\mathrm{II}}$ unchanged, while $P$ and $H_\Sigma$ reverse sign and $\kappa$ is unchanged. Thus the positive-$b$ calculation covers both signs. Zeros are treated at the end using the smooth intrinsic definition \eqref{eq:Delta-intrinsic}.
On this regular graph region put $\Delta=\delta(1)$. The component calculations below identify this with \eqref{eq:Delta-intrinsic}. By the definition of $\sigma$,
\end{revision}
\begin{equation}\label{eq:Delta-nunu-zero}
\Delta(\nu,\nu)=0.
\end{equation}

Let $X,Y$ be tangent to the leaves of $\ker\nu$.  Since
\[
df=\frac{b}{\phi}\nu,
\]
the second fundamental form $h$ of the graph satisfies
\begin{equation}\label{eq:h-tangential}
h(X,Y)
=
\frac{b}{W}P(X,Y).
\end{equation}
Indeed, the terms involving $d\phi\otimes df$ vanish on tangent
vectors, while
\[
\nabla^2f(X,Y)
=
|df|_g P(X,Y).
\]

For the mixed component,
\begin{equation}\label{eq:h-mixed}
h(\nu,X)
=
\frac{d_\Sigma b(X)}{W}.
\end{equation}
To see this, write $q=|df|_g$, so that $b=\phi q$.  Since
$X(f)=0$,
\[
\nabla^2f(\nu,X)=X(q),
\]
and therefore
\[
Wh(\nu,X)
=
\phi X(q)+qX(\phi)
=
X(\phi q)
=
X(b).
\]

It follows from \eqref{eq:Delta-nunu-zero} that
\[
\operatorname{tr}_{\bar g}\Delta
=
\operatorname{tr}_g\Delta
=
\operatorname{tr}_\Sigma\Delta.
\]
Using \eqref{eq:h-tangential},
\begin{align}
\operatorname{tr}_{\bar g}\Delta
&=
\frac{b}{W}H_\Sigma
-
\operatorname{tr}_\Sigma k
\nonumber\\
&=
\rev{\mathrm{I}}.
\label{eq:Delta-trace-I}
\end{align}
Similarly, \eqref{eq:h-mixed} gives
\begin{equation}\label{eq:Delta-mixed-Q}
\Delta(\nu,\cdot)|_\Sigma
=
\frac{d_\Sigma b}{W}
-
k(\nu,\cdot)|_\Sigma.
\end{equation}

We next compare the quadratic terms in the \rev{new Schoen--Yau identity}.
Because $\Delta_{\nu\nu}=0$, decomposing $\Delta$ into tangential
and mixed components gives
\begin{equation}\label{eq:Delta-norm-conversion}
|\Delta|_{\bar g}^2
+
2|\Delta(v,\cdot)|_{\bar g}^2
=
|\Delta|_g^2.
\end{equation}
Indeed, if
\[
Q
=
\Delta(\nu,\cdot)|_\Sigma,
\]
then
\[
|\Delta|_{\bar g}^2
=
|\Delta|_{T\Sigma}^2
+
\frac{2}{W^2}|Q|_g^2,
\]
while
\[
2|\Delta(v,\cdot)|_{\bar g}^2
=
\frac{2b^2}{W^2}|Q|_g^2.
\]
Since $W^2=1+b^2$, their sum is exactly $|\Delta|_g^2$.

We now rewrite the divergence term.  Set
\[
Q
=
\Delta(\nu,\cdot)|_\Sigma.
\]
Then
\[
\Delta(v,\cdot)
=
\frac{b}{W}Q.
\]
On the region where $b\neq0$, the relation
\[
b=\phi|df|_g
\]
and Lemma~\ref{lem:kappa-log-lambda}, applied with $r=f$, give
\begin{equation}\label{eq:phi-b-kappa}
d_\Sigma\log\phi
=
d_\Sigma\log|b|-\kappa.
\end{equation}

For a tangential vector field $X$, we have
\[
\operatorname{div}_gX
=
\operatorname{div}_\Sigma X-g(X,\kappa),
\]
and, because
\[
\sqrt{\det\bar g}
=
W\sqrt{\det g},
\]
we also have
\[
\operatorname{div}_{\bar g}X
=
\operatorname{div}_gX+X(\log W).
\]
Taking
\[
X=\frac{b}{W}Q,
\]
we obtain
\begin{align*}
\frac{1}{\phi}
\operatorname{div}_{\bar g}
\left(
\phi\frac{b}{W}Q
\right)
&=
\operatorname{div}_\Sigma
\left(
\frac{b}{W}Q
\right)
-
\frac{b}{W}Q(\kappa)\\
&\quad
+
\frac{b}{W}
\langle d_\Sigma\log W,Q\rangle_g
+
\frac{b}{W}
\langle d_\Sigma\log\phi,Q\rangle_g.
\end{align*}
Expanding the first term and using
\eqref{eq:phi-b-kappa}, we find
\begin{align*}
\frac{1}{\phi}
\operatorname{div}_{\bar g}
\left(
\phi\Delta(v,\cdot)
\right)
&=
\frac{1}{W}
\bigg[
2\langle d_\Sigma b,Q\rangle_g\\
&\qquad\qquad
+
b\left(
\operatorname{div}_\Sigma Q
-
2Q(\kappa)
\right)
\bigg]\\
&=
\frac{\rev{\mathrm{II}}}{W}.
\end{align*}
\rev{The final intrinsic expression $\rev{\mathrm{II}}/W$ is smooth for a specified smooth pair $(\nu,b)$. The graph calculation applies where $b\neq0$; the full intrinsic identity at zeros is justified below.}

Finally,
\begin{equation}\label{eq:trace-k-conversion}
\operatorname{tr}_{\bar g}k+k(v,v)
=
\operatorname{tr}_g k,
\end{equation}
since
\[
\bar g^{ij}=g^{ij}-v^iv^j,
\]
and
\begin{equation}\label{eq:v-I}
v(\rev{\mathrm{I}})
=
\frac{b}{W}\nu(\rev{\mathrm{I}}).
\end{equation}

Specializing Theorem~\ref{thm:GSY-t-extended} to $t=1$ and using
\eqref{eq:Delta-trace-I},
\eqref{eq:Delta-norm-conversion},
\eqref{eq:trace-k-conversion}, and
\eqref{eq:v-I}, together with the divergence calculation above,
gives
\[
R_{\bar g}
=
16\pi
\left(
\mu-J\left(\frac{b}{W}\nu\right)
\right)
+
|\Delta|_g^2
-
\frac{2}{W}\rev{\mathrm{II}}
+
\rev{\mathrm{I}}^2
+
2\operatorname{tr}_g k\,\rev{\mathrm{I}}
+
\frac{2b}{W}\nu(\rev{\mathrm{I}}).
\]
This is precisely the \rev{gauge-invariant Schoen--Yau identity}
\eqref{eq:main-identity} \rev{on $\{b\neq0\}$. To include any point $p$ with $b(p)=0$, replace $b$ locally by $b+\varepsilon$ and evaluate the established identity at $p$, where $b+\varepsilon\neq0$. Letting $\varepsilon\to0$ proves the result at $p$, because all terms defined from $(\nu,b+\varepsilon)$ converge smoothly. This argument also covers open regions on which $b=0$, without requiring a graph representation there.}

\section{\rev{Examples of Maximally Parameterized Initial Data Sets}} 
\label{app:examples}

In this appendix, we give several classes of maximally parameterized
initial data sets.  The examples range from general graph slices of
static spacetimes to nonspherically symmetric black hole slices,
\rev{and rotating data.}  

\subsection{Graph slices of static spacetimes}

We begin with a general source of maximally parameterized initial data.
Let
\[
(\widetilde M^{n+1},\widetilde g)
=
(\mathbb R\times N^n,-\phi^2dt^2+\bar g)
\]
be a static spacetime, where $\phi>0$ and $\bar g$ are independent of
$t$.  Let
\[
M_f=\{t=f(x):x\in N\}
\]
be a smooth spacelike graph.  Its induced metric is
\begin{equation}\label{eq:static-graph-induced}
g
=
\bar g-\phi^2df\otimes df.
\end{equation}
Thus, if
\[
\beta=\phi\,df,
\]
then
\begin{equation}\label{eq:static-graph-comparison}
\bar g
=
g+\beta\otimes\beta.
\end{equation}

Let $k$ denote the second fundamental form of the graph, with the
normal chosen consistently with the graph convention used in
\rev{Section~\ref{sec:new-identities}}. Then $k$ agrees with the tensor $h$ appearing in the
generalized Jang construction.

\begin{proposition}[Static graph slices]\label{prop:static-graphs}
On every open set $U\subset M_f$ on which $df\neq0$, the induced
initial data $(U,g,k)$ admit a maximally parameterized presentation.
Moreover, the associated Riemannian comparison metric is precisely the
static spatial metric $\bar g$.
\end{proposition}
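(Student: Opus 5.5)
On $U$ we take the data to be defined by the graph itself. We set $\beta=\phi\,df$ (pulled back to $M_f$ via the graph coordinates $x\in N$) and $\nu=df/|df|_g$. Then $\nu$ is a smooth unit $1$-form with $\nu\wedge d\nu=0$, since it is a multiple of the exact form $df$. We put $b=\phi|df|_g>0$, so that $b\nu=\beta$ and $W=\sqrt{1+b^2}$ is the Jang factor. By \eqref{eq:static-graph-induced}–\eqref{eq:static-graph-comparison}, the comparison metric $g+b^2\nu\otimes\nu=g+\beta\otimes\beta$ is exactly the static metric $\bar g$. This settles the second claim immediately.

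The main point is that $k=h$, where $h$ is the Jang-type tensor built from $(f,\phi)$; this is exactly the identification stated just before the proposition. Consequently $\delta(0)=h-k=0$, so $\sigma=k(\nu,\nu)-h(\nu,\nu)=0$ and $\delta(1)=0$. On the regular graph set the appendix computation identifies $\delta(1)$ with the intrinsic tensor $\Delta$ of \eqref{eq:Delta-intrinsic}, so $\Delta=0$. In particular $\mathrm I=\operatorname{tr}_{\bar g}\Delta=0$ by \eqref{eq:Delta-trace-I}, and $Q\coloneq\Delta(\nu,\cdot)|_\Sigma=0$. One can also read this directly from \eqref{eq:h-tangential} and \eqref{eq:h-mixed}: the tangential part of $k$ is $\frac bW P$, with trace $\frac bW H_\Sigma$, and the mixed part is $d_\Sigma b/W$.

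Next we decompose $k$ as in \eqref{eq:kdecomp} and read off the presentation data. We take $A=k(\nu,\nu)$, $Q=0$, and $\hat k=\frac bW\bigl(P-\frac{H_\Sigma}{n-1}\gamma\bigr)$. Here $\hat k$ is tangential and tangentially trace-free because $P\nu=0$ and $\operatorname{tr}P=H_\Sigma$. With these choices the tangential trace part is $B\gamma$ with $B=\frac{bH_\Sigma}{(n-1)W}$, agreeing with \eqref{eq:Bsolution}, and the mixed part is $C=d_\Sigma b/W-Q$, agreeing with \eqref{eq:Csolution}. The weighted divergence equation for $Q$ holds trivially since $Q=0$. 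All of these objects are smooth on $U$ because $df\neq0$ and $\phi>0$. Substituting them into \eqref{eq:kMP} reproduces $k$, so $k=k_{MP}$ on $U$.

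The only step needing care is the identification $k=h$, together with its sign and normal conventions. I would verify it by computing the future unit normal of the graph $t=f$ in $-\phi^2dt^2+\bar g$. Its second fundamental form, expressed with respect to the induced metric $g=\bar g-\phi^2df\otimes df$, should equal $\bigl(\phi\nabla_g^2f+d\phi\otimes df+df\otimes d\phi\bigr)/W$. This is precisely the computation underlying Bray--Khuri's generalized Jang construction, so I would cite \cite{BrayKhuri2010} for it and only check that the orientation convention matches. Once $k=h$ is established, everything else is bookkeeping from the appendix formulas.
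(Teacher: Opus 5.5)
Your proposal is correct and follows essentially the same route as the paper: take $\nu=df/|df|_g$ and $b=\phi|df|_g$, use $k=h$ to get $\sigma=0$ and $\Delta=\delta(1)=0$, and then read off $A=k(\nu,\nu)$, $Q=0$ and the tangential trace-free part $\hat k$. Your explicit $\hat k=\frac bW\bigl(P-\frac{H_\Sigma}{n-1}\gamma\bigr)$ coincides with the paper's $k|_{T\Sigma}-\frac{\operatorname{tr}_\Sigma k}{n-1}\gamma$. The paper likewise takes $k=h$ as given by the graph normal convention rather than recomputing it.
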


\begin{proof}
On $U$, define
\[
\nu=\frac{df}{|df|_g},
\qquad
b=\phi|df|_g.
\]
Then
\[
\beta=b\nu=\phi\,df
\]
and
\[
\bar g=g+b^2\nu\otimes\nu.
\]
Since $k=h$, we have
\[
\delta(0)=h-k=0.
\]
Furthermore,
\[
\sigma
=
k(\nu,\nu)
-
h(\nu,\nu)
=
0,
\]
and hence
\[
\Delta=\delta(1)=0.
\]

The identities in Appendix~\ref{app:gauge-invariant-identity} therefore
give
\[
\operatorname{tr}_\Sigma k
=
\frac{b}{W}H_\Sigma
\]
and
\[
k(\nu,\cdot)|_\Sigma
=
\frac{d_\Sigma b}{W}.
\]
Set
\[
A=k(\nu,\nu),
\qquad
Q=0,
\]
and let
\[
\rev{\hat k}
=
k|_{T\Sigma}
-
\frac{\operatorname{tr}_\Sigma k}{n-1}\gamma
\]
be the tangential trace-free part of $k$.  Substituting these quantities
into \eqref{eq:kMP} recovers $k$ exactly.  Thus
$(\nu,b,A,0,\rev{\hat k})$ is a maximally parameterized presentation.
\end{proof}

This construction does not impose any symmetry on $f$.  For example,
in Minkowski spacetime
\[
\widetilde g=-dt^2+\delta,
\]
an arbitrary spacelike graph $t=f(x)$ satisfies
\[
g=\delta-df\otimes df,
\qquad
\bar g=\delta.
\]
Thus arbitrary nonsymmetric spacelike graphs in Minkowski spacetime
give maximally parameterized data whose comparison metric is Euclidean.

If the critical set of $f$ has empty interior, it may be included in
the singular set $\mathcal S$ of \rev{Definition~\ref{def:MP-initial-data}}.  In examples in which $f$ is
constant on an open set, the presentation may instead be extended
across that region whenever the unit foliation form $\nu$ admits a
smooth continuation there.  The examples below will be chosen so that
this continuation is explicit.

\subsection{Nonspherically symmetric slices of Schwarzschild spacetime}
\label{sec:schwarzschild-examples}

We next consider a class which is particularly natural for the Penrose
inequality.  Let
\[
F(r)=1-\frac{2m}{r^{n-2}},
\qquad
r_+=(2m)^{1/(n-2)},
\]
and write the $(n+1)$-dimensional Schwarzschild metric in the exterior
region as
\begin{equation}\label{eq:schwarzschild-static}
\widetilde g
=
-F(r)\,dt^2
+
\frac{dr^2}{F(r)}
+
r^2\gamma_{\mathbb S^{n-1}}.
\end{equation}
The associated Riemannian Schwarzschild metric is
\begin{equation}\label{eq:riemannian-schwarzschild-example}
g_{\mathrm{Sch}}
=
\frac{dr^2}{F(r)}
+
r^2\gamma_{\mathbb S^{n-1}}.
\end{equation}

To describe slices which cross the future horizon smoothly, introduce
the ingoing Eddington--Finkelstein coordinate
\[
v=t+r^*,
\qquad
\frac{dr^*}{dr}=\frac1F.
\]
Then
\begin{equation}\label{eq:schwarzschild-EF}
\widetilde g
=
-F\,dv^2
+
2\,dv\,dr
+
r^2\gamma_{\mathbb S^{n-1}},
\end{equation}
which extends smoothly through $r=r_+$.

Let
\[
v=V(r,\omega),
\qquad
\omega\in\mathbb S^{n-1},
\]
where \rev{$V$ is a smooth function chosen so that the slice is
spacelike}. No spherical symmetry is
assumed.  Pulling back \eqref{eq:schwarzschild-EF} gives
\begin{equation}\label{eq:general-kruskal-slice}
g
=
-F\,dV\otimes dV
+
dV\otimes dr
+
dr\otimes dV
+
r^2\gamma_{\mathbb S^{n-1}}.
\end{equation}
Writing
\[
dV
=
V_r\,dr+d_\omega V,
\]
we obtain
\begin{align}
g
={}&
\left(2V_r-FV_r^2\right)dr^2
+
(1-FV_r)
\left(
dr\otimes d_\omega V
+
d_\omega V\otimes dr
\right)
\nonumber\\
&+
r^2\gamma_{\mathbb S^{n-1}}
-
F\,d_\omega V\otimes d_\omega V.
\label{eq:general-kruskal-expanded}
\end{align}
In particular, for a generic choice of $V(r,\omega)$ the induced
metric is not spherically symmetric.

On the exterior $r>r_+$, the same slice may be written as a graph
\[
t=f(r,\omega),
\qquad
f=V-r^*.
\]
Since the static lapse is $\phi=\sqrt F$, Proposition~\ref{prop:static-graphs}
applies with
\[
\beta=\sqrt F\,df.
\]
Therefore
\begin{equation}\label{eq:schwarzschild-graph-comparison}
\bar g
=
g+\beta\otimes\beta
=
g_{\mathrm{Sch}}.
\end{equation}
Thus every such slice has the same
Riemannian Schwarzschild comparison geometry.

We now compute the function
\[
u=(1+b^2)^{-1}
\]
which governs the behavior of the comparison metric at the horizon.
For a static graph,
\[
u
=
1-\phi^2|df|_{\bar g}^2.
\]
Using
\[
df
=
\left(V_r-\frac1F\right)dr+d_\omega V
\]
and \eqref{eq:riemannian-schwarzschild-example}, we obtain
\begin{align*}
1-u
&=
F|df|_{g_{\mathrm{Sch}}}^2\\
&=
\left(FV_r-1\right)^2
+
\frac{F}{r^2}|d_\omega V|_{\gamma_{\mathbb S^{n-1}}}^2.
\end{align*}
Hence
\begin{equation}\label{eq:u-general-schwarzschild}
u
=
F
\left(
2V_r
-
FV_r^2
-
\frac1{r^2}|d_\omega V|_{\gamma_{\mathbb S^{n-1}}}^2
\right).
\end{equation}
It is convenient to write
\begin{equation}\label{eq:B-schwarzschild}
B(r,\omega)
=
2V_r
-
FV_r^2
-
\frac1{r^2}|d_\omega V|_{\gamma_{\mathbb S^{n-1}}}^2,
\end{equation}
so that
\[
u=FB.
\]

The same quantity has a direct spacetime interpretation at the horizon.
Restricting \eqref{eq:general-kruskal-expanded} to $r=r_+$ gives the
matrix
\[
\begin{pmatrix}
2V_r & (d_\omega V)_B\\
(d_\omega V)_A & r_+^2(\gamma_{\mathbb S^{n-1}})_{AB}
\end{pmatrix}.
\]
The angular block is positive definite.  Its Schur complement is
\[
2V_r
-
\frac1{r_+^2}|d_\omega V|_{\gamma_{\mathbb S^{n-1}}}^2
=
B(r_+,\omega).
\]
Consequently, the slice is spacelike at the future horizon precisely
when
\begin{equation}\label{eq:horizon-spacelike-B}
B(r_+,\omega)>0.
\end{equation}

Since
\[
F(r_+)=0,
\qquad
F'(r_+)=\frac{n-2}{r_+}>0,
\]
equations \eqref{eq:u-general-schwarzschild} and
\eqref{eq:horizon-spacelike-B} imply
\[
u(r_+,\omega)=0
\]
and
\begin{equation}\label{eq:schwarzschild-simple-zero}
\partial_r u(r_+,\omega)
=
F'(r_+)B(r_+,\omega)>0.
\end{equation}
\begin{revision}
To verify Proposition~\ref{prop:simplezero}, the defining function must
follow the presentation foliation, whose leaves are the level sets of
$f=V-r^*$ rather than, in general, the areal spheres.  Set
\[
c=F'(r_+)>0,
\qquad
r^*=c^{-1}\log(r-r_+)+h(r),
\]
where $h$ is smooth near $r_+$, and define
\begin{equation}\label{eq:schwarzschild-foliation-rho}
\rho=e^{-cf}=(r-r_+)e^{c h(r)-cV(r,\omega)}.
\end{equation}
Then $\rho$ is a smooth boundary defining function with the same leaves
as $f$, and $d\rho$ points toward the exterior at the boundary.  Choose
\[
\nu=\frac{d\rho}{|d\rho|_g},
\qquad
b=-\frac{\sqrt F\,|d\rho|_g}{c\rho}.
\]
This signed choice satisfies $b\nu=\sqrt F\,df$.  Furthermore,
\[
\frac{u}{\rho}
=
\frac{F(r)}{r-r_+}\,B(r,\omega)
 e^{-c h(r)+cV(r,\omega)}
\]
extends smoothly and positively to the boundary.  Transporting angular
coordinates by $\nabla_g\rho/|d\rho|_g^2$ gives an orthogonal collar
\[
g=\lambda^{-2}d\rho^2+\gamma_\rho,
\qquad \lambda=|d\rho|_g>0.
\]
Thus $u=0$ and $\partial_\rho u>0$ at $\rho=0$ in the actual foliation
coordinates, as required by Proposition~\ref{prop:simplezero}.
\end{revision}

The boundary cut
\[
\Sigma_0
=
\{r=r_+,\ v=V(r_+,\omega)\}
\]
is a MOTS.  Indeed, the vector field $\partial_v$ is tangent to the
future horizon, null there, and orthogonal to $\Sigma_0$.  The induced
metric on the horizon sections is
\[
r_+^2\gamma_{\mathbb S^{n-1}},
\]
which is independent of $v$.  The null second fundamental form in the
$\partial_v$ direction therefore has zero trace. In particular,
\[
\theta_+=0.
\]
Moreover,
\begin{equation}\label{eq:schwarzschild-horizon-area}
|\Sigma_0|_g
=
\omega_{n-1}r_+^{n-1}.
\end{equation}

We next record an explicit infinite-dimensional family satisfying the
remaining hypotheses of Theorem~\ref{thm:main}.
\begin{revision}
Choose $a>0$, $R_2>r_+$, and a smooth function $p$ on $[r_+,\infty)$
satisfying
\[
p(r)=aF(r)\quad\text{near }r_+,
\qquad 0<p(r)<1\quad(r_+<r<R_2),
\qquad p(r)=1\quad(r\ge R_2).
\]
\end{revision}  Define a radial reference slice
$V_0$ by
\begin{equation}\label{eq:V0-def}
V_0'(r)=\frac{p(r)}{F(r)}.
\end{equation}
Near the horizon, $V_0'=a$, so $V_0$ extends smoothly in
Eddington--Finkelstein coordinates.  For large $r$,
\[
V_0'=(r^*)',
\]
and hence
\[
V_0-r^*
\]
is constant.  Thus the corresponding spacelike slice agrees with a
Schwarzschild $t=\mathrm{const}$ slice near infinity.
For the reference slice,
\begin{equation}\label{eq:B0}
B_0
=
2V_0'-FV_0'^2
=
\frac{p(2-p)}{F}>0.
\end{equation}
In particular,
\[
B_0(r_+)=2a.
\]

Now let
\[
\eta\in
C^\infty\bigl([r_+,\infty)\times\mathbb S^{n-1}\bigr)
\]
\rev{have support contained in $[r_+,R_1]\times\mathbb S^{n-1}$
for some $r_+<R_1<R_2$, and define}
\begin{equation}\label{eq:Vepsilon}
V_\varepsilon
=
V_0+\varepsilon\eta.
\end{equation}
Then
\begin{align}
B_\varepsilon
={}&
B_0
+
2\varepsilon(1-FV_0')\eta_r
-
F\varepsilon^2\eta_r^2
-
\frac{\varepsilon^2}{r^2}
|d_\omega\eta|^2.
\label{eq:Bepsilon}
\end{align}
Since $B_0>0$, it follows that for sufficiently small
$|\varepsilon|$,
\[
B_\varepsilon>0.
\] Hence $V_\varepsilon$ determines a smooth spacelike slice crossing the
future horizon. If
\[
\eta(r_+,\omega)
\]
\rev{is nonconstant, then the slice intersects the horizon in a cut
which is not invariant under the standard spherical action in the
spacetime, namely}
\[
v
=
V_0(r_+)+\varepsilon\eta(r_+,\omega).
\]
\rev{The induced metric on this cut remains the round metric
$r_+^2\gamma_{\mathbb S^{n-1}}$.  Its angular dependence describes the
cut in spacetime and the surrounding initial data, rather than a
nonround intrinsic horizon geometry.  The family
\eqref{eq:Vepsilon} thus contains arbitrary sufficiently small angular
perturbations with the prescribed support.} Near the horizon, the radial reference metric is
\[
\rev{g_0}
=
\left(2a-a^2F\right)dr^2
+
r^2\gamma_{\mathbb S^{n-1}},
\]
and therefore
\[
\rev{H_{\Sigma_0}(g_0)}
=
\frac{n-1}{r_+\sqrt{2a}}>0.
\]
Consequently, for sufficiently small $|\varepsilon|$, continuity gives
\[
\rev{H_{\Sigma_0}(g)>0}
\]
for the nonspherical slices as well. 

\begin{revision}
The presentation extends globally through the asymptotic region.
Indeed,
\[
f_0'=\frac{p-1}{F}<0\qquad(r_+<r<R_2).
\]
On the support of $\eta$ this derivative is bounded away from zero
in absolute value (and tends to $-\infty$ at the horizon), so for
sufficiently small $|\varepsilon|$ the perturbed $f_r$ is still
negative there.  Thus $df$ has no new zeros.  On $r<R_2$, use
\[
\nu=-\frac{df}{|df|_g},\qquad b=-\sqrt F\,|df|_g;
\]
these agree with the preceding horizon presentation.  Outside the
support of $\eta$ the foliation is radial, and the same fields are
\[
\nu=\frac{dr}{|dr|_g},\qquad b=\sqrt F\,f_0'|dr|_g.
\]
The latter expressions extend smoothly across $r=R_2$, with $b=0$
for $r\ge R_2$.  The remaining presentation parameters are obtained
from $k$ as in Proposition~\ref{prop:static-graphs}; they extend by
$A=Q=\hat k=0$ on the constant-graph region.  The slice is therefore
exactly a Schwarzschild $t=\mathrm{const}$ slice near infinity.  Hence
\end{revision}
\[
k=0
\]
near infinity and
\[
E_{\rm ADM}(g)=E_{\rm ADM}(\bar g)=m,
\qquad
P_{\rm ADM}=0.
\]
The data are vacuum and therefore satisfy the dominant energy
condition.  We have thus obtained an infinite-dimensional family of
nonspherically symmetric maximally parameterized Schwarzschild slices
satisfying the hypotheses of Theorem~\ref{thm:main}.

\begin{revision}
These slices also have an outer-minimizing horizon.  For the radial
reference metric $g_0=B_0\,dr^2+r^2\gamma_{\mathbb S^{n-1}}$, the
areal spheres have outward mean curvature
\[
H_r(g_0)=\frac{n-1}{r\sqrt{B_0}}>0.
\]
This remains positive for all areal spheres under sufficiently small
$C^2$ perturbations $\varepsilon\eta$: on the compact perturbation
region it follows by continuity and a positive lower bound for
$H_r(g_0)$, and outside that region the metric is unchanged. Thus, the areal spheres form a mean convex foliation of the exterior with respect to $g$, which implies that \(\Sigma_0\) is outer minimizing by the divergence theorem applied to their outward unit normal. We then have
\end{revision}
\[
\mathcal A_g(\Sigma_0)=|\Sigma_0|_g
=
\omega_{n-1}r_+^{n-1}.
\]
\rev{Since}
\[
r_+^{n-2}=2m,
\]
\rev{these nonsymmetric slices attain equality:}
\[
m
=
\frac12
\left(
\frac{|\Sigma_0|_g}{\omega_{n-1}}
\right)^{\frac{n-2}{n-1}}.
\]

This example also explains geometrically the horizon hypothesis used
in the main theorem.  For smooth spacelike slices crossing the
nondegenerate Schwarzschild horizon, the simple zero of $u$ is simply
the consequence of the spacelike condition together with the simple
\rev{zero of $F=\phi^2$, the squared Schwarzschild lapse}.

A similar construction can be applied to find maximally parameterized slices of Reissner--Nordström without symmetry assumptions.

\subsection{Rotating data: Boyer--Lindquist slices of Kerr}

The maximally parameterized class also contains natural rotating
examples.  Consider the Kerr spacetime in Boyer--Lindquist coordinates
and write the stationary metric in ADM form
\begin{equation}\label{eq:kerr-ADM}
\widetilde g
=
-N^2dt^2
+
g_{ij}(dx^i+\rev{\zeta}^i dt)(dx^j+\rev{\zeta}^jdt).
\end{equation}
On a $t=\mathrm{const}$ slice, the shift is purely azimuthal:
\[
\rev{\zeta}
=
\rev{\zeta}^\varphi(r,\theta)\partial_\varphi.
\]
The induced metric $g$ and all of its coefficients are independent of
$\varphi$.

Let
\[
\Sigma_r=\{r=\mathrm{const}\},
\qquad
\nu=\frac{dr}{|dr|_g}.
\]
The shift $\rev{\zeta}$ is tangent to $\Sigma_r$.  Since the spacetime is
stationary, the second fundamental form of the $t=\mathrm{const}$
slice is, up to the overall sign convention for $k$,
\[
k_{ij}
=
\frac{1}{2N}
\left(
D_i\rev{\zeta}_j+D_j\rev{\zeta}_i
\right).
\]
Taking the tangential trace along $\Sigma_r$ gives
\[
\operatorname{tr}_{\Sigma_r}k
=
\frac{1}{N}
\operatorname{div}_{\Sigma_r}\rev{\zeta}
\]
up to the same overall sign.  Since $\rev{\zeta}$ has only a
$\varphi$-component and every coefficient is independent of
$\varphi$,
\[
\operatorname{div}_{\Sigma_r}\rev{\zeta}
=
\frac{1}{\sqrt{\det\gamma}}
\partial_\varphi
\left(
\sqrt{\det\gamma}\,\rev{\zeta}^\varphi
\right)
=
0.
\]
Therefore
\begin{equation}\label{eq:kerr-leaf-trace}
\operatorname{tr}_{\Sigma_r}k=0.
\end{equation}

Set
\[
b=0.
\]
Then $W=1$ and the trace equation becomes
\[
\rev{\mathrm{I}}
=
-\operatorname{tr}_{\Sigma_r}k
=
0.
\]
Moreover,
\[
d_\Sigma b=0,
\]
so the zero-divergence equation $\rev{\mathrm{II}}=0$ is automatic.

More explicitly, define
\[
A=k(\nu,\nu),
\qquad
Q=-k(\nu,\cdot)|_{\Sigma_r},
\]
and let
\[
\hat k
=
k|_{T\Sigma_r}.
\]
Equation \eqref{eq:kerr-leaf-trace} implies that $\hat k$ is
tangentially trace-free.  Substituting into \eqref{eq:kMP} gives
exactly the Kerr second fundamental form.  Thus the standard
Boyer--Lindquist Kerr initial data are maximally parameterized.

For nonzero angular momentum, the mixed normal--tangential component
of $k$ is nonzero, and hence
\[
Q\neq0
\]
in general.  The Kerr example therefore gives a natural rotating
example in which the $Q$ degrees of freedom in the maximally
parameterized presentation are genuinely present.

Here
\[
\bar g=g
\]
because $b=0$.  This example is intended to illustrate the size of the
maximally parameterized class rather than the simple-zero horizon
mechanism in Theorem~\ref{thm:main}; the presentation above does not
produce $u\to0$ at the Kerr horizon.

\section{AI statement}

The \rev{gauge-invariant Schoen--Yau identity ($t=1$)} was derived
entirely by humans in early February 2026. By mid-February, we began
using Claude to help with the algebra (which is quite messy) leading
to the \rev{new Schoen--Yau identity for general $t$}.
We used Claude to verify every identity numerically as well as
algebraically, thereby providing a double-check against typos, sign
errors, or other mistakes.  In addition to being an outstanding
assistant, Claude also made some useful mathematical suggestions (as
we derived the special form for the second fundamental form $k$
\rev{in \eqref{eq:kMP}}) which \rev{warrant} a sincere
acknowledgement at a minimum. Had a human researcher made similar
mathematical suggestions, we would have invited them to be a
co-author. A younger researcher would likely have accepted, and a
senior human researcher might have declined the offer. The help was
appreciated, but not decisive.
 
We also used Claude to draft ideas in short notes which we shared
with each other and friends for discussion purposes. However, this
paper was written by humans, using the AI drafted notes as a
reference. \rev{Our Claude conversations were conducted under
settings that exclude them from model training, and we have felt in
control of our user experience throughout.}
 
We also played around with Claude looking for an existence theory to
various regularized versions of our equations, which in a best case
would result in a full proof of the Penrose conjecture. The
resulting discussions got very complicated, resulting in us having
less confidence in their correctness. Even if they were correct,
Claude would have been the lead author, and we didn't want to put
out a paper whose ideas were not optimized both for correctness and
presentation.  We think this is a great direction for the field and
warrants further study by others. We decided the present paper was
interesting enough for now.
 
We are also impressed by the productivity of OpenAI's series of
papers \rev{\cite{seriesofpapers}} on the Penrose conjecture and
related topics, but \rev{the mode of release concerns us}. This
massive \rev{drop of papers} is both good for the field in some
ways (new ideas) and bad for the field in other ways (mistakes,
confusing exposition, lack of clarity what to do next, massive
checking needing to be done, etc.).
 
For example, we think the present paper clarifies what appears to be
the overall direction in the OpenAI paper \rev{\cite{OpenAIPenrose}}
on the proof of the Penrose conjecture. Our new equations
\rev{\eqref{eq:gauge-system}} are degenerate and so need to be
regularized. The OpenAI paper begins with \rev{what we recognize as
regularized versions} of our equations with very little motivation,
a bit like a magic trick. This matters for understanding (both human
and AI) and because it affects what we should do next. For example,
it is still open if our original degenerate system
\rev{\eqref{eq:gauge-system}} of equations has a general weak
existence theory. Providing this theory would be nice progress which
would make it easier for others to find even more applications.
 
As completion of this paper neared, we knew that leaving open
conjectures that imply the Penrose conjecture would inspire many
others to try to finish our program. But we were \rev{surprised}
when an OpenAI paper claiming to finish our program appeared
\rev{on Tuesday, October 6, 2026 (the paper itself carries an
earlier internal date),} the week {\it before} we posted.
\rev{Under current industry practices, privately shared research
can enter AI training data without the authors' knowledge --- for
example, if any recipient of a draft pastes it into a consumer AI
product whose default settings permit training. Authors have no way
to know whether this has happened to their work, and no lab can
demonstrate that it has not; we regard this as a serious and
growing problem for scholarly credit. We do not know whether our
privately circulated notes influenced the OpenAI results, and under
current practices we cannot know. Labs could resolve such questions
by publishing training-data provenance; absent that, authors are
left with timelines, so we state ours.} To be clear, we claim
credit for our program as defined in this paper, which we've been
developing since well before February 2026, the date of our first new key result.

\enlargethispage{2\baselineskip}
\begingroup
\showchangesfalse
\color{black}

\endgroup

\end{document}